\documentclass[reqno]{amsart}
\usepackage{cases}
\usepackage{latexsym}
\usepackage{amsmath}
\usepackage[arrow,matrix]{xy}
\usepackage{stmaryrd}
\usepackage{amsfonts}
\usepackage{amsmath,amssymb,amscd,bbm,amsthm,mathrsfs,dsfont}

\usepackage{fancyhdr}
\usepackage{amsxtra,ifthen}
\usepackage{verbatim}

\theoremstyle{plain}
\newtheorem{theorem}{Theorem}[section]
\newtheorem{lemma}[theorem]{Lemma}

\newtheorem{corollary}[theorem]{Corollary}

\theoremstyle{definition}

\newtheorem{remark}[theorem]{Remark}

\begin{document}

\title[Commuting maps of inflated algebras]
{Commuting maps of inflated algebras}

%
%
%
%
%
%

\author{Hongyu Jia$^{1}$}
\address{Jia: School of Mathematical Sciences, Huaqiao University,
Quanzhou, Fujian, 362021, P. R. China}
\email{jiahy1995@163.com }

\author{Zhankui Xiao$^{1,}$* }
\thanks{*Corresponding Author. Email address: zhkxiao@hqu.edu.cn}
\address{Xiao: School of Mathematical Sciences, Huaqiao University,
Quanzhou, Fujian, 362021, P. R. China}
\email{zhkxiao@hqu.edu.cn}

\thanks{$^{1}$ School of Mathematical Sciences, Huaqiao University,
Quanzhou, Fujian, 362021, P. R. China}

\begin{abstract}
Commuting maps on a class of algebras called inflated
algebras are investigated. 
In particular, we can prove that  every
commuting map $\theta$ on such an algebra is of the form
$\theta(x)=c x+\mu(x)$, where $c$ belongs to the base
field $K$ of characteristic not 2, and $\mu$ is a central-valued linear map.
\end{abstract}

\subjclass[2020]{Primary 16W10, Secondary 17B40, 17B60}

\keywords{commuting map, inflated  algebra}

\maketitle

\section{Introduction}\label{xxsec1}

Throughout this paper, we assume that $K$ is a field of $\operatorname{char} 
(K)\neq 2$. Let $\mathcal{A}$ be an associative algebra over $K$ (not necessarily unital). Then $\mathcal{A}$ becomes a Lie algebra with respect to the Lie bracket operation $[x, y]:=xy-yx$.
A $K$-linear map $\theta: \mathcal{A}\rightarrow \mathcal{A}$ is called {\em commuting} if $[\theta(x), x]=0$ for all $x\in \mathcal{A}$.
A commuting map $\theta$ of $\mathcal{A}$ is said to be {\em proper}
if it is of the form 
$$
\theta(x)=\lambda x+\mu(x), \quad \forall x\in \mathcal{A},
$$
where $\lambda\in Z(\mathcal{A})$, the center of $\mathcal{A}$, and $\mu$ is a $K$-linear map with range in $Z(\mathcal{A})$. A commuting map that is not proper will be called {\em improper}.
If the associative algebra $\mathcal{A}$ is not unital, it is clear that,
for any scalar $c\in K$, the map of $\mathcal{A}$ defined by $x \mapsto cx$ is a typical example of commuting maps. Hence we call a commuting map $\theta$ of $\mathcal{A}$ {\em standard} if it is of the form
$$
\theta(x)=cx+\lambda x +\mu(x), \quad \forall x\in \mathcal{A},
$$
where $c\in K$, $\lambda\in Z(\mathcal{A})$ and $\mu$ is a $K$-linear map with
range in $Z(\mathcal{A})$. For a unital associative algebra $\mathcal{A}$, the notion of a proper commuting map coincides with that of a standard one. If $\mathcal{A}$ is an associative algebra without unity, we denote by $\hat{\mathcal{A}}$ the unitization of $\mathcal{A}$. Then a similar proof of
\cite[Proposition 1]{Cheung} shows that $\mathcal{A}$ has no non-standard commuting map if and only if $\hat{\mathcal{A}}$ has no improper commuting map.

To our knowledge, the first important results on commuting maps are due to
Divinsky \cite{Divinsky} and Posner \cite{Posner}.
The renowned Posner's second theorem states that the existence of a nonzero commuting derivation on a prime ring $\mathcal{A}$ implies the commutativity of $\mathcal{A}$ (see \cite[Theorems 1 and 2]{Posner}).
Motivated by Posner's work, Bre\v{s}ar in \cite{Bre93-1} studied general additive commuting maps on prime rings.
Furthermore, in \cite{Bre93-2} he found an intrinsic connection between commuting maps and the Herstein's Lie-type mapping research program (see \cite{Her}), which in turn promoted the theory of functional identities.
We encourage the reader to read the well-written survey paper \cite{Bre04}, in which Bre\v{s}ar outlined the theory of commuting maps, and particular emphasis was placed
on its application in Lie theory.
More results related to commuting maps can be found in
\cite{Bre91, Bre94, ChenCai, fo1, Lee, LeeLee, XY}, etc.

It was Cheung \cite{Cheung} who initiated the study of commuting maps on formal matrix algebras, and
he determined a class of triangular algebras on which every commuting map is proper. 
The second author of this paper and Wei \cite{XiaoWei} extended
the main results of \cite{Cheung} to
generalized matrix algebras, a kind of Morita context rings.
Recently, the second and third authors of this paper in \cite{JX} established a sufficient and necessary condition on
incidence algebras such that every commuting map is proper.
Motivated by the aforementioned results, we will
study commuting maps on inflated algebras, which are deformations of the full matrix algebras ${\rm M}_n(K)$ and
play an important role in the representation theory of classical groups.

The paper is organized as follows. In Section 2, we introduce the notion of inflated algebras $\mathfrak{M}$ and present some properties of $\mathfrak{M}$ for later use. Section 3 is devoted to the study of commuting maps on $\mathfrak{M}$. By an explicitly description of the action of a commuting map on a basis of $\mathfrak{M}$, we can prove that every commuting map of an inflated algebra is standard. Here we also extend our results to a broader class of Munn's semigroup algebras.

\section{Inflated algebras}\label{xxsec2}

In this section, we recall the definition and some basic properties of inflated algebras.
Let $K$ be a field and $V$ be a $K$-linear space.
Given a $K$-bilinear form $\gamma: V \times V \to K$,
we define an associative algebra  $\mathfrak{M}=\mathfrak{M}(V, \gamma)$ as follows:
as a $K$-linear space, $\mathfrak{M}$ is equal to $V \otimes_K V$ and the multiplication is defined by
$$
(a \otimes b) \cdot (c \otimes d) := \gamma (b, c) a\otimes d, \mathds{R} 
$$
for all $ a, b, c, d \in V $.
This definition makes $\mathfrak{M}$ become an associative $K$-algebra and $\mathfrak{M}$ is called the \textit{inflated algebra of $K$ along $V$}.

Inflated algebras appeared naturally in the representation theory of classical groups and quantum groups (see \cite{xi1} and the references therein).
In fact, \cite[Theorem 4.1]{xi1} shows that every cellular algebra (for instance, Hecke algebras of finite Coxeter groups, Brauer algebras, etc) 
over $K$ is an iterated inflation of finitely many copies of $K$.

Let ${\rm M}_n(K)$ be the set of all $n\times n$ matrices over $K$.
Since $V \otimes_K V\cong {\rm M}_n(K)$ as vector spaces,
where $n={\rm dim}(V)$, we can realize
the structure of an inflated algebra in ${\rm M}_n(K)$.
Let $\{v_1, v_2, \cdots, v_n\}$ be a basis of $V$.
Then the bilinear form $\gamma$ can be characterized by an $n\times n$
matrix $P$ over $K$, that is $P= (\gamma(v_i, v_j)) $ for $1 \leq i, j \leq n$. Now we define a new multiplication ‘$\cdot$’ on the set
${\rm M}_n(K)$ by
$$
A \cdot B =APB \quad \text{for all } A,B \in {\rm M}_n(K).
$$
Under the usual linear operations and the multiplication $\cdot$\, ,
${\rm M}_n(K)$ becomes an associative $K$-algebra,
denoted by $({\rm M}_n(K), P)$. It can be shown that the inflated algebra $\mathfrak{M}(V, \gamma)$ is isomorphic to $({\rm M}_n(K), P)$
(see \cite[Lemma 4.1]{xi2}).
Notice that the inflated algebra $({\rm M}_n(K), P)$ is a generalized matrix algebra in the sense of Brown \cite{Brown}, and is also a Munn's semigroup
algebra in the sense of Munn \cite{Munn}.
The following technical lemma is to some extent well-known
and we sketch the proof here for reader's convenience.

\begin{lemma}\label{lemma1}
With notations as above, if $A$ and $B$ are invertible $n\times n$ matrices over $K$, then $({\rm M}_n(K), P)$ and $({\rm M}_n(K), APB)$ are isomorphic as algebras.
\end{lemma}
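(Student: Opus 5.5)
We will write down an explicit linear bijection between the two algebras and check directly that it turns one multiplication into the other. Write $\ast$ for the product of $({\rm M}_n(K), APB)$, so that $X\ast Y = X(APB)Y$, while the product of $({\rm M}_n(K), P)$ is $X\cdot Y = XPY$. Define
$$
\phi: ({\rm M}_n(K), APB)\longrightarrow ({\rm M}_n(K), P), \qquad \phi(X)=BXA .
$$
Since $A$ and $B$ are invertible, $\phi$ is a $K$-linear bijection, with inverse $Y\mapsto B^{-1}YA^{-1}$.

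The only thing left is multiplicativity, and it comes from regrouping the ordinary matrix product. For all $X,Y\in {\rm M}_n(K)$ we have
$$
\phi(X\ast Y)=B\,X(APB)Y\,A=(BXA)\,P\,(BYA)=\phi(X)\cdot\phi(Y).
$$
Thus $\phi$ is an algebra isomorphism, and the lemma follows.

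There is essentially no obstacle here. The one point needing care is the choice of sides: $B$ multiplies on the left and $A$ on the right. With this order the factors $A$ and $B$ that sandwich $P$ inside $X\ast Y$ are exactly absorbed into $\phi(X)$ and $\phi(Y)$. If the sides were reversed, the identity above would fail.

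Conceptually, the map $\phi$ corresponds to changing the basis of $V$ independently in the two tensor factors of $\mathfrak{M}(V,\gamma)$, so no structural argument is needed beyond the displayed identity.
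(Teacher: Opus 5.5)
Your proof is correct and takes essentially the same approach as the paper: the paper uses the map $X\mapsto B^{-1}XA^{-1}$ from $({\rm M}_n(K),P)$ to $({\rm M}_n(K),APB)$, which is exactly the inverse of your $\phi(X)=BXA$. The multiplicativity check is the same regrouping of the ordinary matrix product.
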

\begin{proof}
The map $\phi:({\rm M}_n(K), P)\to ({\rm M}_n(K), APB)$
defined by $\phi(X)=B^{-1} X A^{-1}$
is an isomorphism of algebras.
\end{proof}

Given an inflated algebra $\mathfrak{M}(V, \gamma)$ and an arbitrary basis $\{v_1, v_2, \cdots, v_n\}$ of $V$, if the rank of $P= (\gamma(v_i, v_j))$ is $r$,
then $\mathfrak{M}(V, \gamma)\cong ({\rm M}_n(K), 
\left[\smallmatrix I_{r} & 0 \\ 0 & 0 \endsmallmatrix \right] )$ as algebras by Lemma \ref{lemma1},
where $I_{r}$ is the $r \times r$ identity matrix and we
use the convention $I_0=0$ if $r=0$.

From now on, we set $\mathfrak{M}_n(r):=({\rm M}_n(K), 
\left[\smallmatrix I_{r} & 0 \\ 0 & 0 \endsmallmatrix \right] )$ for convenience. If $r=0$, the multiplication of
$\mathfrak{M}_n(r)$ is trivial, i.e., $A \cdot B=0$ for all $A, B\in \mathfrak{M}_n(r)$. If $r=n$, the inflated algebra
$\mathfrak{M}_n(r)$ is just the full matrix algebra
${\rm M}_n(K)$. For $0\leq r\leq n$, we define
$\mathcal{J}= \{ 1,\cdots,r\}$ and $\hat{\mathcal{J}} = \{r+1,\cdots,n\}$. Then $\mathbf{n}:=\{1,\cdots,n\}=\mathcal{J} \sqcup \hat{\mathcal{J}}$.
For all $A \in \mathfrak{M}_n(r)$ and $i,j \in \mathbf{n}$,
we denote by $e_{ij}$ the usual matrix unit and have
$$
e_{ij} \cdot  A = 
\begin{cases} 
e_{ij}   A & \text{if } j\in \mathcal{J}, \\
0 & \text{if } j \in \hat{\mathcal{J}},
\end{cases}  \quad 
A \cdot  e_{ij} =
\begin{cases} 
 A e_{ij}   & \text{if } i\in \mathcal{J}, \\
0 & \text{if } i \in \hat{\mathcal{J}},
\end{cases}
$$
and
\[
e_{ij} \cdot  e_{kl} = 
\begin{cases} 
\delta_{jk}e_{il} & \text{if } j, k \in \mathcal{J}, \\
0 & \text{otherwise},
\end{cases}
\]
where $\delta_{jk}\in \{0,1\}$ is the Kronecker delta. These facts related to the multiplication of the inflated algebra
$\mathfrak{M}_n(r)$ will be frequently used later.
To avoid confusion, here we would like to emphasize that
the Lie bracket of $\mathfrak{M}_n(r)$ is defined by
\[
[A, B] = A \cdot  B - B \cdot  A, \quad \text{for all } A, B \in \mathfrak{M}_n(r).
\]

We end this section by a characterization of the center of
$\mathfrak{M}_n(r)$.

\begin{lemma}\label{l1}
Let $Z(\mathfrak{M}_n(r))$ be the center of the inflated algebra $\mathfrak{M}_n(r)$.
\begin{enumerate}
 \item[(i)] If $\hat{\mathcal{J}} =\emptyset$, then $Z(\mathfrak{M}_n(r)) = \left\{ \lambda I_n \mid \lambda \in K \right\}$.
 \item[(ii)] If $\hat{\mathcal{J}} \neq \emptyset$, then $Z(\mathfrak{M}_n(r)) =K\text{-}{\rm Span} \left\{ e_{ij} \mid i, j \in \hat{\mathcal{J}} \right\}$.
\end{enumerate}
\end{lemma}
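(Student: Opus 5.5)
Write $A=\sum_{k,l} a_{kl}e_{kl}\in Z(\mathfrak{M}_n(r))$. The plan is to test $A$ against the matrix units: the condition is $e_{ij}\cdot A = A\cdot e_{ij}$ for all $i,j\in\mathbf{n}$, and then each implication is read off coefficientwise from the multiplication rules at the end of Section 2. Case (i) is the classical one. If $\hat{\mathcal{J}}=\emptyset$, then $r=n$ and $\mathfrak{M}_n(r)$ is the full matrix algebra ${\rm M}_n(K)$, whose center is $KI_n$. Equivalently, with $j,i\in\mathcal{J}$ the identity becomes $e_{ij}A=Ae_{ij}$, which forces $A$ to be scalar.

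For (ii), assume $r<n$. The inclusion ``$\supseteq$'' comes first. If $i,j\in\hat{\mathcal{J}}$, then $e_{ij}\cdot X=0$ because $j\in\hat{\mathcal{J}}$, and $X\cdot e_{ij}=0$ because $i\in\hat{\mathcal{J}}$. So $e_{ij}$ is central, and hence so is its span.

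For ``$\subseteq$'' we take $A$ central and show $a_{kl}=0$ whenever $k\in\mathcal{J}$ or $l\in\mathcal{J}$. Choose $i\in\mathcal{J}$ and $j\in\hat{\mathcal{J}}$, which is possible once $r\ge1$.
\begin{itemize}
\item Then $e_{ij}\cdot A=0$, while $A\cdot e_{ij}=Ae_{ij}=\sum_k a_{ki}e_{kj}$. Hence the column $a_{\ast i}=0$ for every $i\in\mathcal{J}$.
\item Symmetrically, use $e_{ji}$ with $j\in\hat{\mathcal{J}}$ and $i\in\mathcal{J}$. Now $A\cdot e_{ji}=0$, while $e_{ji}\cdot A=e_{ji}A=\sum_l a_{il}e_{jl}$. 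Hence the row $a_{i\ast}=0$ for every $i\in\mathcal{J}$.
\end{itemize}
Together these say all entries $a_{kl}$ with $k\in\mathcal{J}$ or $l\in\mathcal{J}$ vanish. So $A$ lies in the span of $e_{kl}$ with $k,l\in\hat{\mathcal{J}}$.

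The edge case $r=0$ needs no argument: $\mathcal{J}=\emptyset$, the multiplication is trivial, the whole algebra is central, and this matches the claimed span of all $e_{ij}$.

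There is no real obstacle here. The only care needed is to orient the indices correctly, namely that $e_{ij}\cdot A$ vanishes exactly when the column index $j$ is in $\hat{\mathcal{J}}$.
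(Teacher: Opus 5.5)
Your proof is correct and is essentially the paper's argument. The paper writes $A$ in $2\times 2$ block form and uses the arbitrariness of the off-diagonal blocks $B_{12}$, $B_{21}$ to force $A_{11}=A_{12}=A_{21}=0$; this is your test against $e_{ij}$ and $e_{ji}$ with $i\in\mathcal{J}$, $j\in\hat{\mathcal{J}}$, done in block rather than coordinate form, and the reverse inclusion and the treatment of (i) as classical match as well.
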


\begin{proof}
We only need to prove the second statement.
For $\hat{\mathcal{J}} \neq \emptyset$, if $\mathcal{J} = \emptyset$, this result is obvious.
We then assume that $\mathcal{J} \neq \emptyset$, i.e.,
$0<r<n$. 
Given an arbitrary matrix $A\in \mathfrak{M}_n(r)$, we rewrite
it in block form
\[
A = \begin{bmatrix} A_{11} & A_{12} \\ A_{21} & A_{22} \end{bmatrix},
\]
where $A_{11}$ is an $r\times r$ matrix.
If
\[
A = \begin{bmatrix} A_{11} & A_{12} \\ A_{21} & A_{22} \end{bmatrix} \in Z(\mathfrak{M}_n(r)) \quad \text{and} \quad B = \begin{bmatrix} B_{11} & B_{12} \\ B_{21} & B_{22} \end{bmatrix} \in \mathfrak{M}_n(r),
\]
we have 
\[
0 = [A, B] = \begin{bmatrix} 
A_{11}B_{11} - B_{11}A_{11} & A_{11}B_{12} - B_{11}A_{12} \\
A_{21}B_{11} - B_{21}A_{11} & A_{21}B_{12} - B_{21}A_{12} 
\end{bmatrix}.
\]
By the arbitrariness of $B$, we get that $A$ is of the form
$ \begin{bmatrix} 0 & 0 \\ 0 & A_{22} \end{bmatrix}$. This means that $A\in K\text{-}{\rm Span} \{ e_{ij} \mid i, j \in \hat{\mathcal{J}} \}$.
On the other hand, if $i,j\in \hat{\mathcal{J}}$, it is clear
that $e_{ij}\cdot B=B\cdot e_{ij}=0$ for all $B\in \mathfrak{M}_n(r)$. This
completes the proof of the lemma.
\end{proof}

\section{Commuting maps}\label{xxsec3}

In this section, we study commuting maps on the
inflated algebra $\mathfrak{M}_n(r)$. Let us first recall
a well-known result of commuting maps.
 
\begin{lemma}\label{l21}
Let $\mathcal{A}$ be a $K$-algebra with a $K$-basis $\Gamma$. Then a $K$-linear map $\theta: \mathcal{A} \to \mathcal{A}$ is a commuting map if and only if $[\theta(x), y] = [x, \theta(y)]$ for all $x, y\in \Gamma$.
\end{lemma}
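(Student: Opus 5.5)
The plan is the usual linearization argument: show that the commuting condition is equivalent to the symmetric identity on all of $\mathcal{A}$, and then that this identity, being bilinear, is determined by its values on pairs of basis elements. The hypothesis $\operatorname{char}(K)\neq 2$ will be used to pass from the identity back to the commuting condition.

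\emph{Necessity.} Suppose $\theta$ is commuting, so $[\theta(z),z]=0$ for every $z\in\mathcal{A}$. For $x,y\in\mathcal{A}$, apply this to $z=x+y$. Linearity of $\theta$ and bilinearity of the bracket give
\[
0=[\theta(x),x]+[\theta(x),y]+[\theta(y),x]+[\theta(y),y].
\]
The first and last terms vanish, so $[\theta(x),y]+[\theta(y),x]=0$. Antisymmetry of the bracket turns this into $[\theta(x),y]=[x,\theta(y)]$ for all $x,y\in\mathcal{A}$, and in particular for $x,y\in\Gamma$. No assumption on the characteristic is needed in this direction.

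\emph{Sufficiency.} Define $\Phi:\mathcal{A}\times\mathcal{A}\to\mathcal{A}$ by $\Phi(x,y)=[\theta(x),y]-[x,\theta(y)]$. Since $\theta$ is $K$-linear and the bracket is $K$-bilinear, $\Phi$ is $K$-bilinear. By hypothesis $\Phi$ vanishes on $\Gamma\times\Gamma$. Writing arbitrary $x=\sum_i a_i g_i$ and $y=\sum_j b_j g_j$ with $g_i\in\Gamma$ (finite sums), bilinearity gives $\Phi(x,y)=\sum a_ib_j\Phi(g_i,g_j)=0$. Now set $y=x$:
\[
0=[\theta(x),x]-[x,\theta(x)]=2[\theta(x),x].
\]
Because $\operatorname{char}(K)\neq 2$, this yields $[\theta(x),x]=0$ for all $x$, so $\theta$ is commuting.

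There is no real obstacle here. The only point needing care is the use of $\operatorname{char}(K)\neq2$ in the last step. It is essential: without it one only gets $\Phi(x,x)=0$, which does not force $[\theta(x),x]=0$.
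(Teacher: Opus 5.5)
Your proof is correct and complete. The paper gives no argument for this lemma and simply recalls it as well known; your polarization argument (necessity from $z=x+y$, sufficiency from bilinearity of $\Phi$ together with $\Phi(x,x)=2[\theta(x),x]$) is the standard proof being alluded to. Your remark that $\operatorname{char}(K)\neq 2$ is essential for sufficiency is also right: over a field of characteristic $2$, the map $\theta(x)=(x_{11}+x_{22})e_{12}$ on $2\times 2$ upper triangular matrices satisfies the condition on the basis $\{e_{11},e_{12},e_{22}\}$, yet $[\theta(e_{11}),e_{11}]\neq 0$. The paper's standing hypothesis on $K$ covers this.
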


Since $\{e_{xy} \mid x,y \in \mathbf{n} \}$ forms a $K$-basis of $\mathfrak{M}_n(r)$, we denote for all $i, j \in \mathbf{n}$,
\[
\theta(e_{ij}) = \sum_{x,y \in \mathbf{n}} C_{xy}^{ij} e_{xy},
\]
where $C_{xy}^{ij}\in K$ are the structure constants of $\theta$.

\begin{lemma}\label{l22}
The commuting map $\theta$ satisfies:
 \begin{align}
 \theta(e_{ii}) &= \sum_{x \in \mathcal{J}} C_{xx}^{ii} e_{xx} + \sum_{x,y \in \hat{\mathcal{J}}} C_{xy}^{ii} e_{xy}, \quad i   \in \mathbf{n},  \label{eq1}\\
 \theta(e_{ij}) &= \sum_{x \in \mathcal{J}} C_{xx}^{ij} e_{xx} + C_{ij}^{ij} e_{ij} + 
 \sum_{\substack{(x,y) \in \hat{\mathcal{J}}\times \hat{\mathcal{J}}\setminus \{(i,j)\}}} C_{xy}^{ij} e_{xy}, \quad i \neq j \in \mathbf{n}.\label{eq2}
 \end{align}
\end{lemma}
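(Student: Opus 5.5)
The plan is to exploit the identity $[\theta(e_{ij}),e_{ij}]=0$ coming from commutativity, together with Lemma \ref{l21} applied to suitable pairs of basis elements. Most of the work is bookkeeping: for each basis vector $e_{xy}$ we compute the coefficient it receives under these brackets. The multiplication rules of Section 2 are what we use throughout. Recall $e_{ab}\cdot e_{cd}=\delta_{bc}e_{ad}$ when $b,c\in\mathcal{J}$, and zero otherwise.

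\textbf{Diagonal elements.} Fix $i\in\mathbf{n}$. Take $k\in\mathcal{J}$ and $l\neq k$, and apply Lemma \ref{l21} with the pair $e_{ii}$, $e_{kl}$:
\[
[\theta(e_{ii}),e_{kl}]=[e_{ii},\theta(e_{kl})].
\]
We then compare coefficients of basis vectors $e_{xl}$ with $x\neq k$. On the left, $\theta(e_{ii})\cdot e_{kl}=\sum_x C^{ii}_{xk}e_{xl}$ because $k\in\mathcal{J}$. On the right, the only terms are of the form $e_{i*}$ or $e_{*i}$. The cleaner route is to use $[\theta(e_{kl}),e_{kl}]=0$ first, or to choose pairs so that the right-hand side misses the coefficients being read. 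Concretely, for $x\neq k$ we pick $l$ with $l\neq i$ and read off $C^{ii}_{xk}=0$ whenever $x\neq k$ and $x\neq i$. Symmetrically, multiplying on the other side with $k\in\mathcal{J}$ as the column index gives $C^{ii}_{ky}=0$ for $y\neq k$, $y\neq i$. The remaining entries $C^{ii}_{ik}$ and $C^{ii}_{ki}$ with $k\in\mathcal{J}$, $k\neq i$, are killed by $[\theta(e_{ii}),e_{ii}]=0$ when $i\in\mathcal{J}$. Indeed,
\[
[e_{ik},e_{ii}]=-e_{ik}\neq 0 \quad\text{and}\quad [e_{ki},e_{ii}]=e_{ki},
\]
and these terms cannot cancel. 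When $i\in\hat{\mathcal{J}}$ we use the pair $e_{ii}$, $e_{kk}$ instead. Then $[e_{ii},\theta(e_{kk})]=0$ because $e_{ii}$ is central by Lemma \ref{l1}, and this forces $[\theta(e_{ii}),e_{kk}]=0$, which kills every off-diagonal entry in row $k$ and column $k$. Hence all entries $C^{ii}_{xy}$ with $x\neq y$ and $\{x,y\}\cap\mathcal{J}\neq\emptyset$ vanish, which is exactly \eqref{eq1}.

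\textbf{Off-diagonal elements.} Fix $i\neq j$. The same scheme applies: Lemma \ref{l21} with pairs $(e_{ij},e_{kk})$ for $k\in\mathcal{J}$, and also with pairs $(e_{ij},e_{kl})$. The right-hand side $[e_{ij},\theta(e_{kk})]$ is already controlled by \eqref{eq1}. This is the reason to prove \eqref{eq1} first: it shows that $[e_{ij},\theta(e_{kk})]$ is supported on $e_{ij}$ together with terms $e_{i*}$, $e_{*j}$ of a very restricted shape. Comparing with
\[
[\theta(e_{ij}),e_{kk}]=\sum_x C^{ij}_{xk}e_{xk}-\sum_y C^{ij}_{ky}e_{ky}
\]
kills every off-diagonal coefficient in row $k$ or column $k$ except possibly at position $(i,j)$ itself. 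Since this holds for every $k\in\mathcal{J}$, the only surviving off-diagonal entries lie in $\hat{\mathcal{J}}\times\hat{\mathcal{J}}$ or at $(i,j)$. This is \eqref{eq2}.

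\textbf{Expected obstacle.} The main difficulty is the case analysis on whether $i$ and $j$ lie in $\mathcal{J}$ or in $\hat{\mathcal{J}}$. In particular, we must make sure that when $i$ or $j$ lies in $\hat{\mathcal{J}}$, the entries $C^{ij}_{xk}$ with $x$ equal to $i$ or $j$ are still eliminated even though products through $\hat{\mathcal{J}}$ indices vanish. If the pairs $(e_{ij},e_{kk})$ turn out to be insufficient in that case, I would fall back on pairs $(e_{ij},e_{kl})$ with $k,l\in\mathcal{J}$ distinct, or on $[\theta(e_{ij}),e_{ij}]=0$, to clear the remaining coefficients.
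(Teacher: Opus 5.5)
Your proposal is correct and follows essentially the paper's route. You get \eqref{eq1} from $[\theta(e_{ii}),e_{ii}]=0$ when $i\in\mathcal{J}$ and from centrality of $e_{ii}$ when $i\in\hat{\mathcal{J}}$; using test elements $e_{kl}$ instead of the paper's $e_{xx}$ is only a cosmetic difference. You get \eqref{eq2} from the pairs $(e_{ij},e_{kk})$, $k\in\mathcal{J}$, combined with \eqref{eq1}, exactly as the paper does.

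Your description of $[e_{ij},\theta(e_{kk})]$ as possibly containing extra $e_{i*}$, $e_{*j}$ terms is imprecise, and as written it would not justify your conclusion. Correctly computed, the $\hat{\mathcal{J}}\times\hat{\mathcal{J}}$ block of $\theta(e_{kk})$ is central, and $[e_{ij},e_{xx}]=(\delta_{jx}-\delta_{ix})e_{ij}$ for every $x\in\mathcal{J}$ and all $i,j\in\mathbf{n}$. Hence $[e_{ij},\theta(e_{kk})]$ is exactly a scalar multiple of $e_{ij}$. So your comparison works uniformly across the paper's four cases, and the obstacle you anticipated and its fallback are not needed.
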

\begin{proof}
Without loss of generality, we assume that $|\mathbf{n}| \geq 2$ and $|\mathcal{J}| \geq 1$. In order to determine the coefficients $C_{xy}^{ii}$, we consider in two cases.

\textbf{Case 1.1.} $i \in \mathcal{J}$.
 Since $\theta$ is a commuting map,  we get $[\theta(e_{ii}), e_{ii}] = 0$. Let $k \in \mathbf{n} \setminus \{i\}$. Equating the coefficients of $e_{ik}$ and $e_{ki}$, we have
  \begin{align}\label{eq3}
  C_{ik}^{ii} = C_{ki}^{ii} = 0, \quad \text{for all } k \in \mathbf{n} \setminus \{i\}.
  \end{align}
Since $[\theta(e_{ii}), e_{xx}] = [e_{ii}, \theta(e_{xx})]$ for all $x \in \mathcal{J}$ by Lemma \ref{l21}, equating the coefficients of $e_{xy}$ and $e_{yx}$, with $y \neq x$ and $y \neq i$, then we have
 \begin{align}\label{eq4}
 C_{xy}^{ii} = C_{yx}^{ii} = 0, \quad \text{for all } x \in \mathcal{J}, y \in \mathbf{n} \setminus \{i,x\}.
 \end{align}

Combining the equations (\ref{eq3}) and (\ref{eq4}), we obtain
 \begin{align*}
 \theta(e_{ii})=& \sum_{x,y \in \mathcal{J}} C_{xy}^{ii} e_{xy}
 +\sum_{(x,y) \in \mathcal{J} \times \hat{\mathcal{J}}} (C_{xy}^{ii} e_{xy}+C_{yx}^{ii} e_{yx})+\sum_{x,y \in \hat{\mathcal{J}}} C_{xy}^{ii} e_{xy}\\
=&\sum_{x \in \mathcal{J}} C_{xx}^{ii} e_{xx} + \sum_{x,y \in \mathcal{J}, x\neq y} C_{xy}^{ii} e_{xy}
 +\sum_{y\in \hat{\mathcal{J}}} (C_{iy}^{ii} e_{iy}+C_{yi}^{ii} e_{yi})\\
  &+\sum_{x \in \mathcal{J}\setminus \{i\}, y\in \hat{\mathcal{J}}} (C_{xy}^{ii} e_{xy}+C_{yx}^{ii} e_{yx})+\sum_{x,y \in \hat{\mathcal{J}}} C_{xy}^{ii} e_{xy}\\
=& \sum_{x \in \mathcal{J}} C_{xx}^{ii} e_{xx} + \sum_{x, y \in \hat{\mathcal{J}}} C_{xy}^{ii} e_{xy}.
 \end{align*}
This proves (\ref{eq1}) for the case $ i \in \mathcal{J} $.

\textbf{Case 1.2.} $ i \in \hat{\mathcal{J}} $. Since $ [\theta(e_{ii}), e_{xx}] = [e_{ii}, \theta(e_{xx})] = e_{ii} \cdot  \theta(e_{xx}) - \theta(e_{xx}) \cdot  e_{ii} = 0 $ for all $ x \in \mathcal{J} $, we have 
\[
0 = \theta(e_{ii}) \cdot  e_{xx} - e_{xx} \cdot  \theta(e_{ii}) = \theta(e_{ii}) e_{xx} - e_{xx} \theta(e_{ii}).
\]
Then, equating the coefficients of $ e_{xy} $ and $ e_{yx} $, where $ y \in \mathbf{n} \setminus \{x\} $, we obtain 
 \begin{align}\label{eq5}
 C_{xy}^{ii} = C_{yx}^{ii} = 0, \quad \text{for all } x \in \mathcal{J},\, y \in \mathbf{n} \setminus \{x\}.
 \end{align}
It follows from the equation (\ref{eq5}) that
\[
\begin{aligned}
&\theta(e_{ii}) = \sum_{\substack{x,y \in \mathbf{n}}} C_{xy}^{ii} e_{xy} \\
=&\sum_{x,y \in \mathcal{J}} C_{xy}^{ii} e_{xy}
 +\sum_{(x,y) \in \mathcal{J} \times \hat{\mathcal{J}}} (C_{xy}^{ii} e_{xy}+C_{yx}^{ii} e_{yx})+\sum_{x,y \in \hat{\mathcal{J}}} C_{xy}^{ii} e_{xy}\\
=& \sum_{x \in \mathcal{J}} C_{xx}^{ii} e_{xx} + \sum_{x,y \in \mathcal{J}, x\neq y} C_{xy}^{ii} e_{xy}+
\sum_{x \in \mathcal{J}, y\in \hat{\mathcal{J}}} (C_{xy}^{ii} e_{xy}+C_{yx}^{ii} e_{yx})+\sum_{x,y \in \hat{\mathcal{J}}} C_{xy}^{ii} e_{xy}\\
=& \sum_{x \in \mathcal{J}} C_{xx}^{ii} e_{xx} + \sum_{\substack{x , y \in \hat{\mathcal{J}}}} C_{xy}^{ii} e_{xy}.
\end{aligned}
\]
This proves (\ref{eq1}) for the case $ i \in \hat{\mathcal{J}} $.

For all $ i,j \in \mathbf{n} $ with $ i \neq j $, we will prove that the coefficients of $\theta(e_{ij})$ satisfy equation (\ref{eq2}) by dividing into four cases.

\textbf{Case 2.1.} $ i,j \in \mathcal{J} $. Let $ x \in \mathcal{J} $. Since $ \theta $ is a commuting map, we have $ [\theta(e_{ij}), e_{xx}] = [e_{ij}, \theta(e_{xx})] $.
On the one hand, by (\ref{eq1}), 
\[
\begin{aligned} 
\relax[e_{ij},\theta(e_{xx})]&= e_{ij}\cdot \theta(e_{xx})-\theta(e_{xx})\cdot  e_{ij} = e_{ij} ( \sum_{y \in \mathcal{J}} C_{yy}^{xx} e_{yy})-(\sum_{y \in \mathcal{J}} C_{yy}^{xx} e_{yy}) e_{ij} \\
&= ( C_{jj}^{xx} - C_{ii}^{xx} ) e_{ij}.
\end{aligned}
\]
On the other hand, 
$
[\theta(e_{ij}), e_{xx}] = \theta(e_{ij}) \cdot  e_{xx} - e_{xx} \cdot  \theta(e_{ij}) = \theta(e_{ij}) e_{xx} - e_{xx} \theta(e_{ij}).
$
Combining the just above two identities, we get  
 \begin{align}\label{eq6}
 \theta(e_{ij}) e_{xx} - e_{xx} \theta(e_{ij}) = \left( C_{jj}^{xx} - C_{ii}^{xx} \right) e_{ij},   \text{ for all }  i, j,x \in \mathcal{J}.
 \end{align}
If $ x = i $ and $ y \in \mathbf{n} \setminus \{j\} $, then, by equating the coefficients of $ e_{iy} $ and $ e_{ij} $, we have 
 \begin{align}\label{eq7}
 C_{iy}^{ij} = 0, \quad \text{for all } i \neq j \in \mathcal{J} \text{ and } y \in \mathbf{n} \setminus \{j\}.
 \end{align}
and 
 \begin{align}\label{eq8}
C_{ij}^{ij} = C_{ii}^{ii} - C_{jj}^{ii}, \quad \text{for all } i \neq j \in \mathcal{J}. 
\end{align}
If $ x = j $ in (\ref{eq6}), for all $ y \in \mathbf{n} \setminus \{i\} $, then  by equating the coefficients of $ e_{yj} $ and $ e_{ij} $, we have 
 \begin{align}\label{eq9}
C_{yj}^{ij} = 0, \quad \text{for all } i \neq j \in \mathcal{J} \text{ and } y \in \mathbf{n} \setminus \{i\}. 
\end{align}
and 
 \begin{align}\label{eq10}
C_{ij}^{ij} = C_{jj}^{jj} - C_{ii}^{jj}, \quad \text{for all }  i \neq j \in \mathcal{J}. 
 \end{align}
If $ x \neq i $ in (\ref{eq6}), then $ e_{xx} \theta(e_{ij}) (I_n - e_{xx}) = 0 $, we have
 \begin{align}\label{eq11}
C_{xy}^{ij} = 0, \quad x \in \mathcal{J} \setminus \{i\}, \, y \in \mathbf{n} \setminus \{x\}.
\end{align}
If $ x \neq j $ in (\ref{eq6}), then $ (I_n - e_{xx}) \theta(e_{ij}) e_{xx} = 0 $, we have
 \begin{align}\label{eq12}
C_{yx}^{ij} = 0, \quad x \in \mathcal{J} \setminus \{j\}, \, y \in \mathbf{n} \setminus \{x\}.
\end{align}
Combining the equations (\ref{eq7}), (\ref{eq9}), (\ref{eq11}) and (\ref{eq12}), we can express $\theta(e_{ij})$ as follows
\[
\begin{aligned}
&\theta(e_{ij})=\sum_{\substack{x,y \in \mathbf{n}}} C_{xy}^{ij} e_{xy}\\
=& \sum_{x,y \in \mathcal{J}} C_{xy}^{ij} e_{xy} + \sum_{\substack{x \in \mathcal{J} \\ y \in \hat{\mathcal{J}}}} C_{xy}^{ij} e_{xy} + \sum_{\substack{x \in \hat{\mathcal{J}} \\ y \in \mathcal{J}}} C_{xy}^{ij} e_{xy} + \sum_{\substack{x,y \in \hat{\mathcal{J}}}} C_{xy}^{ij} e_{xy} \\
=& \sum_{x \in \mathcal{J}} C_{xx}^{ij} e_{xx} +\sum_{\substack{x \in \mathcal{J}\setminus \{i\} \\y \in \mathcal{J}\setminus \{j\}, x\neq y }} C_{xy}^{ij} e_{xy} + \sum_{\substack{x \in \mathcal{J}\setminus \{i\} \\y \in \hat{\mathcal{J}} }} C_{xy}^{ij} e_{xy} +
\sum_{\substack{x \in \hat{\mathcal{J}}\\y \in \mathcal{J}\setminus \{j\} }} C_{xy}^{ij} e_{xy} \\
&+\sum_{\substack{x \in \mathcal{J}\setminus \{i\}  }} C_{xj}^{ij} e_{xj}
\sum_{\substack{y \in \mathcal{J}\setminus \{j\} }} C_{iy}^{ij} e_{iy}+C_{ij}^{ij} e_{ij}
+ \sum_{\substack{x,y \in \hat{\mathcal{J}}}} C_{xy}^{ij} e_{xy} \\
=& \sum_{x \in \mathcal{J}} C_{xx}^{ij} e_{xx} + C_{ij}^{ij} e_{ij} + 
 \sum_{\substack{(x,y) \in \hat{\mathcal{J}}\times \hat{\mathcal{J}}}} C_{xy}^{ij} e_{xy}.
\end{aligned}
\]
Thus we prove (\ref{eq2}) for the case $ i \neq j \in \mathcal{J} $.

\textbf{Case 2.2.} $ i \in \mathcal{J},~j \in \hat{\mathcal{J}} $. Let $ x \in \mathcal{J} $. Since $ \theta $ is a commuting map, $ [\theta(e_{ij}), e_{xx}] = [e_{ij}, \theta(e_{xx})] $.
On the one hand, by (\ref{eq1}),
$
[e_{ij}, \theta(e_{xx})] = e_{ij} \cdot  \theta(e_{xx}) - \theta(e_{xx}) \cdot  e_{ij} = -C_{ii}^{xx} e_{ij}.
$
On the other hand, $ [\theta(e_{ij}), e_{xx}] = \theta(e_{ij}) e_{xx} - e_{xx} \theta(e_{ij}) $.
It follows that
 \begin{align}\label{eq13}
\theta(e_{ij}) e_{xx} - e_{xx} \theta(e_{ij}) = -C_{ii}^{xx} e_{ij} .
 \end{align}
If $ x = i $ in  (\ref{eq13}), then  by equating the coefficients of $ e_{iy} $ and $ e_{ij} $, we have
 \begin{align}\label{eq14}
C_{iy}^{ij} = 0, \quad \text{for all } y \in \mathbf{n} \setminus \{i, j\},
 \end{align}
and
 \begin{align}\label{eq15}
C_{ij}^{ij} = C_{ii}^{ii},\quad \text{for all } i \in \mathcal{J}, j \in \hat{\mathcal{J}}. 
 \end{align}
If $ x \neq i $ in (\ref{eq13}), then  $ e_{xx} \theta(e_{ij}) (I_n - e_{xx}) = 0 $ and hence
 \begin{align}\label{eq16}
C_{xy}^{ij} = 0, \quad \text{for all } x \in \mathcal{J} \setminus \{i\}, y \in \mathbf{n} \setminus \{x\}.
 \end{align}
Since $ x \in \mathcal{J}$ and $ j \in \hat{\mathcal{J}} $ imply  $ x \neq j $, we have $ (I_n - e_{xx}) \theta(e_{ij}) e_{xx} = 0 $ for all $x\in \mathcal{J}$ by (\ref{eq13}). This leads to
 \begin{align}\label{eq17}
C_{yx}^{ij} = 0, \quad \text{for all } x \in \mathcal{J}, y \in \mathbf{n} \setminus \{x\}.
 \end{align}
Now combining the equations (\ref{eq14}), (\ref{eq16}) and (\ref{eq17}), we obtain
\[
\begin{aligned}
\theta(e_{ij}) 
=& \sum_{\substack{x,y \in \mathbf{n}}} C_{xy}^{ij} e_{xy}=\sum_{\substack{x \in \mathcal{J} \\ y \in \mathbf{n}}} C_{xy}^{ij} e_{xy} + \sum_{\substack{x \in \hat{\mathcal{J}} \\ y \in \mathcal{J}}} C_{xy}^{ij} e_{xy} + \sum_{\substack{x,y \in \hat{\mathcal{J}}}} C_{xy}^{ij} e_{xy} \\
=& \sum_{x \in \mathcal{J}} C_{xx}^{ij} e_{xx} + \sum_{\substack{y \in \mathbf{n}\setminus \{i,j\} }} C_{iy}^{ij} e_{iy}+C_{ij}^{ij} e_{ij}
+\sum_{\substack{x \in \mathcal{J}\setminus \{i\} \\y \in \mathbf{n}\setminus \{x\} }} C_{xy}^{ij} e_{xy} \\
&+\sum_{\substack{x \in \hat{\mathcal{J}}\\y \in \mathcal{J} }} C_{xy}^{ij} e_{xy}+ \sum_{\substack{x,y \in \hat{\mathcal{J}}}} C_{xy}^{ij} e_{xy} \\
=& \sum_{x \in \mathcal{J}} C_{xx}^{ij} e_{xx} + C_{ij}^{ij} e_{ij} + 
 \sum_{\substack{(x,y) \in \hat{\mathcal{J}}\times \hat{\mathcal{J}}}} C_{xy}^{ij} e_{xy}.
\end{aligned}
\]
This proves (\ref{eq2}) for the case $i \in \mathcal{J}, ~j \in \hat{\mathcal{J}}$.

\textbf{Case 2.3.} $i \in \hat{\mathcal{J}},~j \in \mathcal{J}$. Let $x \in \mathcal{J}$. Since $\theta$ is a commuting map, we have $[\theta(e_{ij}), e_{xx}] = [e_{ij}, \theta(e_{xx})]$.
On the one hand, by (\ref{eq1}) and the multiplication of $\mathfrak{M}_n(r)$, $[e_{ij}, \theta(e_{xx})] = e_{ij} \cdot  \theta(e_{xx}) - \theta(e_{xx}) \cdot  e_{ij} = C_{jj}^{xx} e_{ij}$.
On the other hand, $[\theta(e_{ij}), e_{xx}] = \theta(e_{ij}) e_{xx} - e_{xx} \theta(e_{ij})$. It follows that
 \begin{align}\label{eq18}
\theta(e_{ij}) e_{xx} - e_{xx} \theta(e_{ij}) = C_{jj}^{xx} e_{ij}.
 \end{align}
Since $x \in \mathcal{J},~i \in \hat{\mathcal{J}}$ imply $x \neq i$, we obtain $e_{xx} \theta(e_{ij}) (I_n - e_{xx}) = 0$. This leads to
 \begin{align}\label{eq19}
C_{xy}^{ij} = 0, \quad \text{for all } x \in \mathcal{J}, ~y \in \mathbf{n} \setminus \{x\}.  
\end{align}
If $x = j$ in (\ref{eq18}). Then, by equating the coefficients of $e_{yj}$ and $e_{ij}$, $y \in \mathbf{n} \setminus \{i, j\}$, we have
 \begin{align}\label{eq20}
C_{yj}^{ij} = 0, \quad \text{for all } y \in \mathbf{n} \setminus \{i, j\},
 \end{align}
and
 \begin{align}\label{eq21}
C_{ij}^{ij} = C_{jj}^{jj}, \quad \text{for all } i \in \hat{\mathcal{J}}, ~j \in \mathcal{J}. 
 \end{align}
If $x \neq j$ in (\ref{eq18}). Then $(I_n - e_{xx}) \theta(e_{ij}) e_{xx} = 0$, we have
 \begin{align}\label{eq22}
C_{yx}^{ij} = 0, \quad \text{for all } x \in \mathcal{J} \setminus \{j\},~ y \in \mathbf{n} \setminus \{x\} .
\end{align}
Combining the equations (\ref{eq19}), (\ref{eq20}) and (\ref{eq22}), we obtain
\[
\begin{aligned}
\theta(e_{ij})
=& \sum_{\substack{x,y \in \mathbf{n}}} C_{xy}^{ij} e_{xy}
= \sum_{\substack{x \in \mathbf{n} \\ y \in \mathcal{J}}} C_{xy}^{ij} e_{xy}+ \sum_{\substack{x \in \mathcal{J} \\ y \in \hat{\mathcal{J}}}} C_{xy}^{ij} e_{xy} +\sum_{\substack{x,y \in \hat{\mathcal{J}}}} C_{xy}^{ij} e_{xy}\\
=&\sum_{x \in \mathcal{J}} C_{xx}^{ij} e_{xx} + \sum_{\substack{x \in \mathbf{n}\setminus \{i,j\} }} C_{xj}^{ij} e_{xj}+C_{ij}^{ij} e_{ij}+\sum_{\substack{x \in \mathcal{J} \\y \in \mathbf{n}\setminus \{j, x\} }} C_{xy}^{ij} e_{xy} \\
&+\sum_{\substack{x \in \mathcal{J}\\y \in \hat{\mathcal{J}} }} C_{xy}^{ij} e_{xy}+ \sum_{\substack{x,y \in \hat{\mathcal{J}}}} C_{xy}^{ij} e_{xy}  \\
=& \sum_{x \in \mathcal{J}} C_{xx}^{ij} e_{xx} + C_{ij}^{ij} e_{ij} + 
 \sum_{\substack{(x,y) \in \hat{\mathcal{J}}\times \hat{\mathcal{J}}}} C_{xy}^{ij} e_{xy}.
\end{aligned}
\]
This proves (\ref{eq2}) for the case $i \in \hat{\mathcal{J}}, ~j \in \mathcal{J}$.

\textbf{Case 2.4.} $i, j \in \hat{\mathcal{J}}$ and $i \neq j$. Let $x \in \mathcal{J}$. Since $\theta$ is a commuting map, we have $[\theta(e_{ij}), e_{xx}] = [e_{ij}, \theta(e_{xx})] = 0$ and therefore $\theta(e_{ij}) e_{xx} - e_{xx} \theta(e_{ij}) = 0$.
By equating the coefficients of $e_{yx}$ and $e_{xy}$, where $y \in \mathbf{n} \setminus \{x\}$, we have
 \begin{align}\label{eq23}
C_{xy}^{ij} = C_{yx}^{ij} = 0, \quad \text{for all } x \in \mathcal{J},~y \in \mathbf{n} \setminus \{x\}.
\end{align}
It follows from the equation (\ref{eq23}) that
\[
\begin{aligned}
\theta(e_{ij}) =& \sum_{\substack{x,y \in \mathbf{n}}} C_{xy}^{ij} e_{xy} \\
=& \sum_{x \in \mathcal{J}} C_{xx}^{ij} e_{xx} + \sum_{\substack{x \in \hat{\mathcal{J}} \\ y \in \mathcal{J}}} C_{xy}^{ij} e_{xy} + \sum_{\substack{x \in \mathcal{J} \\ y \in \mathbf{n} \setminus \{x\}}} C_{xy}^{ij} e_{xy} + \sum_{\substack{x,y \in \hat{\mathcal{J}}}} C_{xy}^{ij} e_{xy} \\
=& \sum_{x \in \mathcal{J}} C_{xx}^{ij} e_{xx}  + \sum_{\substack{(x,y) \in \hat{\mathcal{J}}\times \hat{\mathcal{J}}}} C_{xy}^{ij} e_{xy}\\
=& \sum_{x \in \mathcal{J}} C_{xx}^{ij} e_{xx} + C_{ij}^{ij} e_{ij} + \sum_{\substack{(x,y) \in \hat{\mathcal{J}}\times \hat{\mathcal{J}}\setminus \{(i,j)\}}} C_{xy}^{ij} e_{xy}.
\end{aligned}
\]
This proves (\ref{eq2}) for the case $i\neq j \in \hat{\mathcal{J}}$ and we complete the proof of this lemma.
\end{proof}

\begin{lemma}\label{l23}
Let $i \in \mathbf{n}$. Then $C_{kk}^{ii} = C_{ll}^{ii}$, for all $k, l \in \mathcal{J} \setminus \{i\}$. Moreover, if $\hat{\mathcal{J}} \neq \emptyset$, then $C_ {kk}^{ii} = 0$ for all $k \in \mathcal{J} \setminus \{i\}$.
\end{lemma}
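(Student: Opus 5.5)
The plan is to apply Lemma \ref{l21} to well-chosen pairs of basis elements, namely $e_{ii}$ together with an off-diagonal $e_{kl}$. We substitute the normal forms (\ref{eq1}) and (\ref{eq2}) from Lemma \ref{l22} and compare the coefficient of $e_{kl}$ on both sides. Throughout we use that $A\cdot B=APB$ with $P=\left[\smallmatrix I_r&0\\0&0\endsmallmatrix\right]$. Right multiplication by $P$ kills the columns indexed by $\hat{\mathcal{J}}$, and left multiplication by $P$ kills the rows indexed by $\hat{\mathcal{J}}$.

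\emph{First assertion.} If $|\mathcal{J}\setminus\{i\}|\le 1$ there is nothing to prove, so fix $k\neq l$ in $\mathcal{J}\setminus\{i\}$. Consider $[\theta(e_{ii}),e_{kl}]=[e_{ii},\theta(e_{kl})]$.
\begin{itemize}
\item By (\ref{eq1}), $\theta(e_{ii})P=\sum_{x\in\mathcal{J}}C^{ii}_{xx}e_{xx}$, since the $\hat{\mathcal{J}}\times\hat{\mathcal{J}}$ block is annihilated. Hence $\theta(e_{ii})\cdot e_{kl}=C^{ii}_{kk}e_{kl}$.
\item Similarly $e_{kl}\cdot\theta(e_{ii})=e_{kl}\theta(e_{ii})=C^{ii}_{ll}e_{kl}$, because row $l\in\mathcal{J}$ of $\theta(e_{ii})$ has only the diagonal entry.
\item So the left side is $(C^{ii}_{kk}-C^{ii}_{ll})e_{kl}$.
\item For the right side, if $i\in\hat{\mathcal{J}}$ then $e_{ii}\cdot X=X\cdot e_{ii}=0$ for every $X$, so it vanishes.
\item If $i\in\mathcal{J}$, then by (\ref{eq2}) the $i$-th row and $i$-th column of $\theta(e_{kl})$ both reduce to the single entry $C^{kl}_{ii}$. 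This uses $i\neq k,l$, and that the $\hat{\mathcal{J}}$ block does not meet row or column $i$. Thus $e_{ii}\cdot\theta(e_{kl})=\theta(e_{kl})\cdot e_{ii}=C^{kl}_{ii}e_{ii}$, and the right side is $0$.
\end{itemize}
Comparing coefficients of $e_{kl}$ gives $C^{ii}_{kk}=C^{ii}_{ll}$.

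\emph{Second assertion.} Suppose $\hat{\mathcal{J}}\neq\emptyset$. Pick $m\in\hat{\mathcal{J}}$ and $k\in\mathcal{J}\setminus\{i\}$, and apply Lemma \ref{l21} to $e_{ii}$ and $e_{km}$.
\begin{itemize}
\item As above, $\theta(e_{ii})\cdot e_{km}=C^{ii}_{kk}e_{km}$.
\item On the other hand, $e_{km}\cdot\theta(e_{ii})=e_{km}P\theta(e_{ii})=0$, since $m\in\hat{\mathcal{J}}$. So the left side is $C^{ii}_{kk}e_{km}$.
\item The right side is again $0$. If $i\in\hat{\mathcal{J}}$ this is trivial; that case also covers $m=i$.
\item If $i\in\mathcal{J}$, then by (\ref{eq2}) row $i$ and column $i$ of $\theta(e_{km})$ contain only $C^{km}_{ii}$. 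Here $k\neq i$, and $m\neq i$ because $m\in\hat{\mathcal{J}}$ while $i\in\mathcal{J}$. So the two products cancel.
\end{itemize}
Hence $C^{ii}_{kk}=0$.

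The only delicate point is the bookkeeping in the right-hand sides. One must check that the single off-diagonal term $C^{kl}_{kl}e_{kl}$ (respectively $C^{km}_{km}e_{km}$) in (\ref{eq2}) never lies in row or column $i$, and that the $\hat{\mathcal{J}}\times\hat{\mathcal{J}}$ blocks are invisible after multiplying by $e_{ii}$ with $i\in\mathcal{J}$. Both facts follow from $i\notin\{k,l\}$ (respectively $i\notin\{k,m\}$) together with the shape given in Lemma \ref{l22}.
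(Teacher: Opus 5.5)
Your proof is correct and follows essentially the same route as the paper. Both arguments bracket $e_{ii}$ against an off-diagonal matrix unit via Lemma~\ref{l21}, substitute the normal forms (\ref{eq1}) and (\ref{eq2}), and compare the coefficient of that unit: first with both indices in $\mathcal{J}\setminus\{i\}$, then with one index in $\hat{\mathcal{J}}$. The differences are cosmetic. You use $e_{km}$ ($k\in\mathcal{J}$, $m\in\hat{\mathcal{J}}$) uniformly to obtain the vanishing, whereas the paper uses the transposed position $e_{kl}$ with $k\in\hat{\mathcal{J}}$ when $i\in\mathcal{J}$, and $e_{ki}$ when $i\in\hat{\mathcal{J}}$. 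You also verify the first assertion for $i\in\hat{\mathcal{J}}$ directly rather than deducing it from the vanishing.
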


\begin{proof}
Let $k, l \in \mathbf{n} \setminus \{i\}$ with $k \neq l$.
Since $\theta$ is a commuting map, we have $[\theta(e_{ii}), e_{kl}] = [e_{ii}, \theta(e_{kl})]$. Then the formulas (\ref{eq1}) and (\ref{eq2}) imply that
\[
\begin{aligned}
&\sum_{x \in \mathcal{J}} C_{xx}^{ii} e_{xx} \cdot  e_{kl} - e_{kl}\cdot  \sum_{x \in \mathcal{J}} C_{xx}^{ii} e_{xx}\\
 =& e_{ii} \cdot  \left( \sum_{x \in \mathcal{J}} C_{xx}^{kl} e_{xx} + C_{kl}^{kl} e_{kl} \right) - \left( \sum_{x \in \mathcal{J}} C_{xx}^{kl} e_{xx} + C_{kl}^{kl} e_{kl} \right) \cdot  e_{ii}
\end{aligned}
\]
and hence
 \begin{align}\label{eq24}
\sum_{x \in \mathcal{J}} C_{xx}^{ii} ( \delta_{xk}-\delta_{xl}) e_{kl} = C_{kl}^{kl}e_{ii} \cdot  e_{kl} - C_{kl}^{kl} e_{kl} \cdot  e_{ii}. 
\end{align}

If $i, k, l \in \mathcal{J}$, the equation (\ref{eq24}) implies that
 \begin{align}\label{eq25}
(C_{kk}^{ii} - C_{ll}^{ii}) e_{kl} = (\delta_{ik} - \delta_{il}) C_{kl}^{kl} e_{kl}. 
 \end{align}
Noting that $k \neq l$, we can get the relation $C_{kk}^{ii} = C_{ll}^{ii}$ from (\ref{eq25}) by setting $k \neq i \neq l$ in $\mathcal{J}$.
This  proves the first statement of the lemma when  $\hat{\mathcal{J}}= \emptyset$.
Next, we assume without loss of generality that $\hat{\mathcal{J}} \neq \emptyset$.

If $i = l \in \hat{\mathcal{J}}$ and $k \in \mathcal{J}$ in (\ref{eq24}), we have $\delta_{xl}=0$ for all $x\in \mathcal{J}$. This implies that $\left( \sum_{x \in \mathcal{J}} C_{xx}^{ii} e_{xx} \right) e_{ki} = 0$ and hence
 \begin{align}\label{eq26}
C_{kk}^{ii} = 0, \quad \text{for all } i \in \hat{\mathcal{J}},~k \in \mathcal{J} .
\end{align}
Thus we have $C_{kk}^{ii} = C_{ll}^{ii}$ when $i \in \hat{\mathcal{J}}$ and $k, l \in \mathcal{J}$. We have
completed the proof of the first statement.

If $i \neq l \in \mathcal{J}$, $k \in \hat{\mathcal{J}}$ in (\ref{eq24}), we have $\delta_{xk}=0$ for all $x\in \mathcal{J}$, which in turn shows
 \begin{align}\label{eq27}
C_{ll}^{ii} = 0, \quad \text{for all } i \neq l \in \mathcal{J} \text{ and } \hat{\mathcal{J}} \neq \emptyset. 
 \end{align}
Combining (\ref{eq26}) and (\ref{eq27}), we have $C_{kk}^{ii} = 0$ for all $k \in \mathcal{J} \setminus \{i\}$ if $\hat{\mathcal{J}} \neq \emptyset$.
\end{proof}

By Lemma \ref{l23} we can rewrite the identity (\ref{eq1})
as follows.

\begin{corollary}\label{l24}
For any $i \in \mathbf{n}$, there is
\[
\begin{aligned}
\theta(e_{ii}) =& \lambda_i\sum_{x \in \mathcal{J}}  e_{xx} + (C_{ii}^{ii} - \lambda_i) e_{ii}, \quad \text{if }\ \hat{\mathcal{J}} = \emptyset;\\
\theta(e_{ii}) =& C_{ii}^{ii} e_{ii} + \sum_{(x, y) \in \hat{\mathcal{J}}\times \hat{\mathcal{J}}\setminus \{(i,i)\}} C_{xy}^{ii} e_{xy},
\quad \text{if }\ \hat{\mathcal{J}} \neq \emptyset,
\end{aligned}
\]
where $\lambda_i=C_{kk}^{ii}$ for all $k\in \mathcal{J}\setminus  \{i\}$.
\end{corollary}

\begin{lemma}\label{l25}
 Let $i \neq j\in\mathbf{n}$. Then $C_{kk}^{ij} = C_{ll}^{ij}$ for all $k, l \in \mathcal{J}$.
Moreover, if $\hat{\mathcal{J}} \neq \emptyset$, then $C_{kk}^{ij} = 0$ for all $k \in \mathcal{J}$.
\end{lemma}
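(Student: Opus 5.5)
The plan is to mirror the proof of Lemma \ref{l23}: test $\theta(e_{ij})$ against suitable off-diagonal matrix units $e_{kl}$ using Lemma \ref{l21}, $[\theta(e_{ij}),e_{kl}]=[e_{ij},\theta(e_{kl})]$, and compare the coefficient of $e_{kl}$ on both sides. By (\ref{eq2}) we write
\[
\theta(e_{ij})=D^{ij}+C^{ij}_{ij}e_{ij}+Z^{ij},\qquad D^{ij}=\sum_{x\in\mathcal{J}}C^{ij}_{xx}e_{xx},
\]
where $Z^{ij}$ lies in the span of $e_{xy}$ with $x,y\in\hat{\mathcal{J}}$. By Lemma \ref{l1}, or directly from the multiplication rules, $Z^{ij}$ is annihilated on both sides by everything, so it drops out of every bracket. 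The same decomposition holds for $\theta(e_{kl})$, with the diagonal part $D^{kl}$ and the coefficient $C^{kl}_{kl}$.

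\textbf{First statement.} We may assume $|\mathcal{J}|\ge 2$. Take $k\neq l$ in $\mathcal{J}$. On the left, $D^{ij}\cdot e_{kl}-e_{kl}\cdot D^{ij}=(C^{ij}_{kk}-C^{ij}_{ll})e_{kl}$. The remaining left-hand term $C^{ij}_{ij}(e_{ij}\cdot e_{kl}-e_{kl}\cdot e_{ij})$ only produces multiples of $e_{il}$ (when $j=k$) and $e_{kj}$ (when $l=i$). Neither equals $e_{kl}$, since that would force $i=j$.

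On the right, $e_{ij}\cdot D^{kl}-D^{kl}\cdot e_{ij}$ is a multiple of $e_{ij}$. The term $C^{kl}_{kl}(e_{ij}\cdot e_{kl}-e_{kl}\cdot e_{ij})$ again produces only $e_{il}$ or $e_{kj}$, neither of which is $e_{kl}$. So whenever $(k,l)\neq(i,j)$, the right side has no $e_{kl}$-component, and comparing coefficients gives $C^{ij}_{kk}=C^{ij}_{ll}$. If $(k,l)=(i,j)$, use the pair $(j,i)$ instead, which is different from $(i,j)$; this yields $C^{ij}_{jj}=C^{ij}_{ii}$. Hence all $C^{ij}_{kk}$ with $k\in\mathcal{J}$ coincide.

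\textbf{Second statement.} Suppose $\hat{\mathcal{J}}\neq\emptyset$, and fix $k\in\mathcal{J}$ and $l\in\hat{\mathcal{J}}$.

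\emph{Case $(i,j)\neq(k,l)$.} Bracket with $e_{kl}$. On the left, $D^{ij}\cdot e_{kl}=C^{ij}_{kk}e_{kl}$, while $e_{kl}\cdot D^{ij}=0$ because $l\in\hat{\mathcal{J}}$. The term $C^{ij}_{ij}e_{ij}\cdot e_{kl}$ gives at most $e_{il}$, which is not $e_{kl}$ since $j=k\neq i$ in that situation. The term $e_{kl}\cdot e_{ij}$ vanishes because $l\notin\mathcal{J}$. On the right, $D^{kl}\cdot e_{ij}$ and $e_{ij}\cdot D^{kl}$ are multiples of $e_{ij}\neq e_{kl}$. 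The terms $e_{ij}\cdot e_{kl}$ and $e_{kl}\cdot e_{ij}$ give $e_{il}$ or nothing. Thus the $e_{kl}$-coefficient yields $C^{ij}_{kk}=0$.

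\emph{Case $(i,j)=(k,l)$.} Bracket with $e_{lk}$ instead and run the symmetric check. Now $e_{lk}\cdot D^{ij}=C^{ij}_{kk}e_{lk}$ and $D^{ij}\cdot e_{lk}=0$. None of the other terms produce $e_{lk}$, because $e_{ij}=e_{kl}$ and the cross products give only $e_{kk}$ or $0$. This again yields $C^{ij}_{kk}=0$.

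The only real obstacle is this bookkeeping: one has to make sure that, for the chosen test unit, no cross term on either side can land on the compared matrix unit. This is exactly why the exceptional coincidence $(k,l)=(i,j)$ must be handled by swapping to the transposed unit.
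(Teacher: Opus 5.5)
Your proof is correct and follows essentially the paper's route: you bracket $\theta(e_{ij})$ against off-diagonal units $e_{kl}$ via Lemma \ref{l21} and (\ref{eq2}) and read off the $e_{kl}$-coefficient. Your split on whether the test pair coincides with $(i,j)$ is a tidier packaging of the paper's four cases, and it also yields the first statement for all $i\neq j$ directly. There is one harmless slip. In the case $(i,j)=(k,l)$, the only nonzero cross product is $e_{lk}\cdot e_{kl}=e_{ll}$, not $e_{kk}$, because $e_{kl}\cdot e_{lk}=0$ when $l\in\hat{\mathcal{J}}$. Since $e_{ll}$ is still not $e_{lk}$, the conclusion stands.
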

\begin{proof}
Let $i \neq j$ and $k \neq l$ in $\mathbf{n}$. Since $\theta$ is a commuting map,   we have
\[
[\theta(e_{ij}), e_{kl}] = [e_{ij}, \theta(e_{kl})].
\]
It follows from the formula (\ref{eq2}) that
\[
\begin{aligned}
&\left( C_{ij}^{ij} e_{ij} + \sum_{x \in \mathcal{J}} C_{xx}^{ij} e_{xx} \right) \cdot  e_{kl} - e_{kl} \cdot  \left( C_{ij}^{ij} e_{ij} + \sum_{x \in \mathcal{J}} C_{xx}^{ij} e_{xx} \right) \\
&= e_{ij} \cdot  \left( \sum_{x \in \mathcal{J}} C_{xx}^{kl} e_{xx} + C_{kl}^{kl} e_{kl} \right) - \left( \sum_{x \in \mathcal{J}} C_{xx}^{kl} e_{xx} + C_{kl}^{kl} e_{kl} \right) \cdot  e_{ij},
\end{aligned}
\]
which in turn shows that
 \begin{align}\label{eq28}
\left( C_{ij}^{ij} - C_{kl}^{kl} \right) [e_{ij}, e_{kl}] + \sum_{x \in \mathcal{J}} C_{xx}^{ij} [e_{xx}, e_{kl}] = \sum_{x \in \mathcal{J}} C_{xx}^{kl} [e_{ij}, e_{xx}]. 
 \end{align}

If $i, j, k, l \in \mathcal{J}$, it follows from (\ref{eq28}) that
\[
\left( C_{kk}^{ij} - C_{ll}^{ij} \right) e_{kl} + \left( C_{ij}^{ij} - C_{kl}^{kl} \right) \left( \delta_{jk} e_{il} - \delta_{il} e_{kj} \right) = \left( C_{jj}^{kl} - C_{ii}^{kl} \right) e_{ij}.
\]
When $k, l \in \mathcal{J} \setminus \{i\}$, equating the coefficient of $e_{kl}$, we have
 \begin{align}\label{eq29}
C_{kk}^{ij} = C_{ll}^{ij}, \quad \text{if } k \neq i \neq l \text{ in } \mathcal{J}.
 \end{align}
When $k \in \mathcal{J} \setminus \{i\}$ and $l = i$, equating the coefficient of $e_{ki}$, we have
 \begin{align}\label{eq30}
C_{kk}^{ij} = C_{ii}^{ij}, \quad \text{if } k \neq i \text{ in } \mathcal{J}.
 \end{align}
Then equations (\ref{eq29}) and (\ref{eq30}) imply that
 \begin{align}\label{eq31}
C_{kk}^{ij} = C_{ll}^{ij}, \quad \text{for all } k, l \in \mathcal{J}.
 \end{align}
The equation (\ref{eq31}) proves the first statement of the lemma when  $\hat{\mathcal{J}}= \emptyset$.

Next, we assume without loss of generality that $\hat{\mathcal{J}} \neq \emptyset$.
To study the coefficients $C_{kk}^{ij}$ for all $k\in \mathcal{J}$, there are four cases occurring.

\textbf{Case 1.} $i, j \in \mathcal{J}$. Taking $l\in \mathcal{J}$ and $k \in \hat{\mathcal{J}}$ in (\ref{eq28}),
we have
\[
\left( C_{kl}^{kl} - C_{ij}^{ij} \right) \delta_{il} e_{kj} - C_{ll}^{ij} e_{kl} = \left( C_{jj}^{kl} - C_{ii}^{kl} \right) e_{ij}.
\]
Since $i \in \mathcal{J}$ and $k \in \hat{\mathcal{J}}$, there is $k \neq i$.
If $l = i$, by equating the coefficients of $e_{ki}$, we can get
 \begin{align}\label{eq32}
C_{ii}^{ij} = 0, \quad \text{for all } i, j \in \mathcal{J}.
\end{align}
If $l \neq i$, by equating the coefficients of $e_{kl}$,
we can get
 \begin{align}\label{eq33}
C_{ll}^{ij} = 0, \quad \text{for all } i, j \in \mathcal{J},~ l \in \mathcal{J} \setminus \{i\}.
 \end{align}
Combining the identities (\ref{eq32}) and (\ref{eq33}),
we have
 \begin{align}\label{eq34}
C_{ll}^{ij} = 0, \quad \text{for all } i, j, l \in \mathcal{J}.
 \end{align}

\textbf{Case 2.} $i \in \mathcal{J}$, $j \in \hat{\mathcal{J}}$. For any $k, l \in \mathbf{n}$, it follows from (\ref{eq28}) that
 \begin{align}\label{eq35}
\left( -C_{ij}^{ij} + C_{kl}^{kl} \right) e_{kl} \cdot  e_{ij} + \sum_{x \in \mathcal{J}} C_{xx}^{ij} [e_{xx}, e_{kl}] = -C_{ii}^{kl} e_{ij}. 
\end{align}
If $k \neq i \in \mathcal{J}$ and $l = j \in \hat{\mathcal{J}}$, then $e_{kl} \cdot  e_{ij} = 0$. By equating the coefficients of $e_{kj}$, we have
 \begin{align}\label{eq36}
C_{kk}^{ij} = 0, \quad \text{for all } k \neq i \in \mathcal{J}, j \in \hat{\mathcal{J}}.
 \end{align}
If $l = i \in \mathcal{J}$ and $k = j \in \hat{\mathcal{J}}$, then by (\ref{eq35}), we can get
\[
\left( C_{ji}^{ji} - C_{ij}^{ij} \right) e_{jj} - C_{ii}^{ij} e_{ji} = -C_{ii}^{ji} e_{ij}.
\]
By equating the coefficients of $e_{ji}$ and $e_{ij}$, we get
 \begin{align}\label{eq37}
C_{ii}^{ij} = 0, \quad \text{for all } i \in \mathcal{J} \text{ and } j \in \hat{\mathcal{J}},
\end{align}
and
 \begin{align}\label{eq38}
C_{ii}^{ji} = 0, \quad \text{for all } i \in \mathcal{J} \text{ and } j \in \hat{\mathcal{J}}.
 \end{align}
Combining (\ref{eq36}) and (\ref{eq37}), we have
 \begin{align}\label{eq39}
C_{kk}^{ij} = 0, \quad \text{for all } i, k \in \mathcal{J}, j \in \hat{\mathcal{J}}.
 \end{align}

\textbf{Case 3.} $i \in \hat{\mathcal{J}}$, $j \in \mathcal{J}$. Taking 
$l \neq j$ and $k=i$ in (\ref{eq28}), we have
by equating the coefficients of $e_{il}$ that
 \begin{align}\label{eq40}
C_{ll}^{ij} = 0, \quad \text{for all } i \in \hat{\mathcal{J}} \text{ and } l \neq j \in \mathcal{J}.
 \end{align}
Notice that we can rewrite (\ref{eq38}) as $C_{jj}^{ij}=0$ for $i \in \hat{\mathcal{J}}$, $j \in \mathcal{J}$.
This fact together with (\ref{eq40}) shows that
 \begin{align}\label{eq41}
C_{ll}^{ij} = 0, \quad \text{for all } i \in \hat{\mathcal{J}},~ l, j \in \mathcal{J}.
 \end{align}

\textbf{Case 4.} $i, j \in \hat{\mathcal{J}}$. Taking $l=j$ and $k \in \mathcal{J}$ in (\ref{eq28}), we have
by equating the coefficients of $e_{kj}$ that
 \begin{align}\label{eq42}
C_{kk}^{ij} = 0, \quad \text{for all } i, j \in \hat{\mathcal{J}}, k \in \mathcal{J}.
 \end{align}
Combining (\ref{eq34}), (\ref{eq39}), (\ref{eq41}) and (\ref{eq42}), we obtain $C_{kk}^{ij} = 0$ for all $k \in \mathcal{J}$ and $i\neq j \in \mathbf{n}$, when $\hat{\mathcal{J}} \neq \emptyset$. This fact together with (\ref{eq31}) completes the proof of the lemma.
\end{proof}

By Lemma \ref{l25} we can rewrite the identity (\ref{eq2})
as follows.

\begin{corollary}\label{l26}
For any $i \neq j\in \mathbf{n}$, there is 
\[
\begin{aligned}
\theta(e_{ij}) =& \lambda_{ij} \sum_{x \in \mathcal{J}} e_{xx} + C_{ij}^{ij} e_{ij},
\quad if\ \hat{\mathcal{J}} = \emptyset;\\
\theta(e_{ij}) =& C_{ij}^{ij} e_{ij} + \sum_{(x, y) \in \hat{\mathcal{J}}\times \hat{\mathcal{J}}\setminus \{(i,j)\}} C_{xy}^{ij} e_{xy},
\quad if\ \hat{\mathcal{J}} \neq \emptyset,
\end{aligned}
\]
where $\lambda_{ij}=C_{kk}^{ij}$ for all $k\in \mathcal{J}$.
\end{corollary}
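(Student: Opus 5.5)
The statement is a direct rewriting of formula (\ref{eq2}) of Lemma \ref{l22} using Lemma \ref{l25}. I would start from
\[
\theta(e_{ij}) = \sum_{x \in \mathcal{J}} C_{xx}^{ij} e_{xx} + C_{ij}^{ij} e_{ij} + \sum_{(x,y) \in \hat{\mathcal{J}}\times \hat{\mathcal{J}}\setminus \{(i,j)\}} C_{xy}^{ij} e_{xy},
\]
which holds for every $i\neq j$ in $\mathbf{n}$. Then I would split into the two cases $\hat{\mathcal{J}}=\emptyset$ and $\hat{\mathcal{J}}\neq\emptyset$.

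\textbf{Case $\hat{\mathcal{J}}=\emptyset$.} Here the last sum is empty. By the first statement of Lemma \ref{l25}, the diagonal coefficients $C_{kk}^{ij}$ do not depend on $k\in\mathcal{J}$. Call this common value $\lambda_{ij}$; it is well defined because $\mathcal{J}=\mathbf{n}\neq\emptyset$ when $n\geq 1$. The first sum then collapses to $\lambda_{ij}\sum_{x\in\mathcal{J}}e_{xx}$, and this gives the first formula.

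\textbf{Case $\hat{\mathcal{J}}\neq\emptyset$.} The second statement of Lemma \ref{l25} gives $C_{kk}^{ij}=0$ for every $k\in\mathcal{J}$. So the first sum vanishes and the remaining terms are exactly the second formula. In this case $\lambda_{ij}$ equals $0$, which is consistent with the definition given, and vacuous if $\mathcal{J}=\emptyset$.

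There is no real obstacle. The only point needing care is that in the case $\hat{\mathcal{J}}=\emptyset$ the term $C_{ij}^{ij}e_{ij}$ and the diagonal sum do not overlap, since $i\neq j$, so nothing is double counted.
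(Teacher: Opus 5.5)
Your proposal is correct and matches the paper's argument. The paper also obtains this corollary by substituting Lemma \ref{l25} into formula (\ref{eq2}): the common diagonal coefficient gives $\lambda_{ij}\sum_{x\in\mathcal{J}}e_{xx}$ when $\hat{\mathcal{J}}=\emptyset$, and the vanishing of $C_{kk}^{ij}$ for $k\in\mathcal{J}$ removes the diagonal sum when $\hat{\mathcal{J}}\neq\emptyset$.
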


\begin{lemma}\label{l27}
(i) Let $i,j,k,l\in \mathcal{J} $. If $ i \neq j $ and $ k \neq l $, then $C_{ij}^{ij} = C_{kl}^{kl} $.

(ii) If $ \hat{\mathcal{J}} \neq \emptyset $, then
$ C_{ij}^{ij} = C_{kl}^{kl} $, for all $ (i,j), (k,l) \in \mathbf{n} \times \mathbf{n} \setminus \hat{\mathcal{J}} \times \hat{\mathcal{J}} $.
\end{lemma}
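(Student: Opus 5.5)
We plan to compare the diagonal coefficients $C_{ij}^{ij}$ by testing the commuting identity of Lemma \ref{l21}, namely $[\theta(e_{ij}),e_{kl}]=[e_{ij},\theta(e_{kl})]$, on suitable pairs of basis elements. Corollaries \ref{l24} and \ref{l26} have already reduced $\theta$ on basis elements to a very simple shape. For $i\neq j$, identity (\ref{eq28}) is the convenient form:
\[
\left( C_{ij}^{ij} - C_{kl}^{kl} \right) [e_{ij}, e_{kl}] + \sum_{x \in \mathcal{J}} C_{xx}^{ij} [e_{xx}, e_{kl}] = \sum_{x \in \mathcal{J}} C_{xx}^{kl} [e_{ij}, e_{xx}].
\]
We will only extract coefficients of matrix units that do not appear in the correction terms.

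\textbf{Part (i).} Take $i\ne j$ and $k\ne l$ in $\mathcal{J}$. First treat a pair sharing an index: $(i,j)$ and $(j,l)$ with $l\neq i$ (and $l\ne j$). Then $[e_{ij},e_{jl}]=e_{il}$, since $j\in\mathcal{J}$ and $e_{jl}\cdot e_{ij}=0$ because $l\ne i$. By Corollary \ref{l26}, when $\hat{\mathcal J}=\emptyset$ the other terms are $\lambda_{ij}[I,e_{jl}]=0$ and $\lambda_{jl}[e_{ij},I]=0$. When $\hat{\mathcal{J}}\ne\emptyset$, the extra terms live in the $\hat{\mathcal J}\times\hat{\mathcal J}$ block, which is central by Lemma \ref{l1}, so their brackets vanish. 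Comparing coefficients of $e_{il}$ gives $C_{ij}^{ij}=C_{jl}^{jl}$.

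It remains to handle $l=i$, i.e. to compare $(i,j)$ with $(j,i)$. Here $[e_{ij},e_{ji}]=e_{ii}-e_{jj}$, and the correction terms are multiples of $I$ in the case $\hat{\mathcal J}=\emptyset$. If $|\mathcal J|\ge3$, we avoid this computation by passing through a third index $m$: $(i,j)\to(j,m)\to(m,i)\to(i,j)$, and similarly $(j,i)$ chains to everything. If $|\mathcal{J}|=2$ and $\hat{\mathcal J}=\emptyset$, we are in $\mathrm M_2(K)$. In that case we use the $e_{ii}$ data instead: Corollary \ref{l24} gives $\theta(e_{ii})=\lambda_i I+(C^{ii}_{ii}-\lambda_i)e_{ii}$, and (\ref{eq8}) together with (\ref{eq10}) express $C_{ij}^{ij}$ and $C_{ji}^{ji}$ through these quantities. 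Combining this with the relation $[\theta(e_{ii}),e_{jj}]=[e_{ii},\theta(e_{jj})]$ and with the commuting condition applied to $e_{11}+e_{22}$ should force $C_{12}^{12}=C_{21}^{21}$. This is the step where we expect the main friction, so we would resolve it by direct computation in $\mathrm M_2(K)$. If $\hat{\mathcal J}\neq\emptyset$, we instead route through a hat index, as in part (ii).

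\textbf{Part (ii).} Assume $\hat{\mathcal J}\ne\emptyset$, so every $\theta(e_{ij})$ with $(i,j)\notin\hat{\mathcal J}\times\hat{\mathcal J}$ equals $C_{ij}^{ij}e_{ij}$ plus a central term. Formula (\ref{eq28}) then reduces to $(C_{ij}^{ij}-C_{kl}^{kl})[e_{ij},e_{kl}]=0$, so any two such pairs with $[e_{ij},e_{kl}]\ne0$ have equal coefficients. For $i\in\mathcal J$ and $p\in\hat{\mathcal J}$, the pairs $(i,p)$ and $(p,i)$ satisfy $[e_{ip},e_{pi}]=-e_{pp}\neq0$. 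For $a\in\mathcal J$, we have $[e_{ai},e_{ip}]=e_{ap}$ and $[e_{pa},e_{ai}]=e_{pi}$, and also $[e_{ai},e_{ia}]=e_{aa}-e_{ii}$. These relations connect every pair in $\mathbf n\times\mathbf n\setminus\hat{\mathcal J}\times\hat{\mathcal J}$, including the diagonal pairs $(i,i)$ with $i\in\mathcal J$; for those we use $\theta(e_{ii})=C^{ii}_{ii}e_{ii}+{}$central, from Corollary \ref{l24}. Together with (\ref{eq15}) and (\ref{eq21}), this yields connectivity and hence equality of all the $C_{ij}^{ij}$.
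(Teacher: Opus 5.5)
\textbf{Part (i) has a genuine gap: you never validly link $(i,j)$ to $(j,i)$ in ${\rm M}_2(K)$ and ${\rm M}_3(K)$.}

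Your part (ii) is correct and differs from the paper's route. You reduce the commuting identity to $(C_{ij}^{ij}-C_{kl}^{kl})[e_{ij},e_{kl}]=0$ for pairs outside $\hat{\mathcal J}\times\hat{\mathcal J}$ and argue by connectivity. The paper instead reads the equalities off (\ref{eq8}), (\ref{eq10}), (\ref{eq15}), (\ref{eq21}) and the vanishing $C_{kk}^{ii}=0$ from Lemma \ref{l23}. Because your (ii) also gives (i) whenever $\hat{\mathcal J}\neq\emptyset$, the remaining case of (i) is $\mathcal J=\mathbf n$, i.e.\ ${\rm M}_n(K)$.

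Your shared-index relation $(a,b)\sim(b,c)$, with $a,b,c$ distinct, fails to link $(i,j)$ and $(j,i)$ when $|\mathcal J|=3$. It splits the six off-diagonal pairs into the two cyclic classes $\{(1,2),(2,3),(3,1)\}$ and $\{(2,1),(1,3),(3,2)\}$. Your detour $(i,j)\to(j,m)\to(m,i)\to(i,j)$ just returns to its start, so the claim that $(j,i)$ ``chains to everything'' is false. The chaining does work for $|\mathcal J|\ge 4$, e.g.\ $(1,2)\to(2,3)\to(3,4)\to(4,2)\to(2,1)$. For $|\mathcal J|=2$ you only assert that some combination ``should force'' the equality. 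So, as written, (i) is not proved for ${\rm M}_2(K)$ and ${\rm M}_3(K)$.

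The computation you set aside is trivial and closes the gap for every $n$. It is the same transpose comparison you use in (ii) via $[e_{ai},e_{ia}]=e_{aa}-e_{ii}$. In (\ref{eq28}) with $(k,l)=(j,i)$, the correction terms are $(C_{jj}^{ij}-C_{ii}^{ij})e_{ji}$ and $(C_{jj}^{ji}-C_{ii}^{ji})e_{ij}$, both off-diagonal. Hence the coefficient of $e_{ii}$ in $(C_{ij}^{ij}-C_{ji}^{ji})(e_{ii}-e_{jj})$ must vanish. Even more directly, (\ref{eq10}) for $(i,j)$ and (\ref{eq8}) for $(j,i)$ give the same value $C_{jj}^{jj}-C_{ii}^{jj}$.

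This is exactly how the paper proves (i). From (\ref{eq8}), (\ref{eq10}) and Lemma \ref{l23} it gets $C_{ij}^{ij}=C_{jj}^{jj}-C_{ii}^{jj}=C_{jj}^{jj}-C_{ll}^{jj}=C_{jl}^{jl}$ for all $l\neq j$, including $l=i$. It then chains once more, with no case split on $|\mathcal J|$.
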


\begin{proof}
(i) Let $i,j,k,l\in \mathcal{J}$ with $i\neq j $, $k\neq l$.
We have from the equations (\ref{eq8}) and (\ref{eq10})
that $C_{kl}^{kl} = C_{kk}^{kk} - C_{ll}^{kk}$ and  $C_{ij}^{ij} = C_{jj}^{jj} - C_{ii}^{jj}$.
If $j = k$, then $C_{ii}^{jj}=C_{ll}^{jj}$ by Lemma \ref{l23}, and hence
 \begin{align}\label{eq43}
C_{ij}^{ij} = C_{jj}^{jj} - C_{ii}^{jj} = C_{jj}^{jj} - C_{ll}^{jj} = C_{jl}^{jl}, 
\quad \text{for all } i \neq j   \neq l \text{ in } \mathcal{J}.
 \end{align}
If $j \neq k$, applying equation (\ref {eq43}) twice, we obtain
\[
C_{ij}^{ij} = C_{jk}^{jk} = C_{kl}^{kl},
\]
for all $ i \neq j $, $ j \neq k $ and $ k \neq l $ in $\mathcal{J}$. We complete the proof of the first claim.

(ii) Since $\hat{\mathcal{J}} \neq \emptyset $, by Lemma \ref{l23} we have $ C_{kk}^{ii} = 0 $ for all
$k \in \mathcal{J} \setminus \{i\}$. It follows from the
equations (\ref{eq8}) and (\ref{eq10}) that
$C_{ij}^{ij} = C_{jj}^{jj} = C_{ii}^{ii}$, for all $i\neq j$ in $\mathcal{J}$.
Then, by the just proved part (i), we have
\[
C_{jj}^{jj} = C_{ii}^{ii} =C_{ij}^{ij} = C_{kl}^{kl}=C_{kk}^{kk}=C_{ll}^{ll}, \quad \text{for all } i \neq j \text{ and } k \neq l \text{ in } \mathcal{J}.
\]
Consequently,
\begin{align}\label{eq44}
C_{ij}^{ij} = C_{kl}^{kl}, \quad \text{for all }  i,j,k,l \in \mathcal{J}.
\end{align}
If $ (i,j) \in \hat{\mathcal{J}} \times \mathcal{J} $ (resp. $ (i,j) \in \mathcal{J} \times \hat{\mathcal{J}} $), then we have $ C_{ij}^{ij} = C_{jj}^{jj}$ by (\ref{eq21})
(resp. $ C_{ij}^{ij} = C_{ii}^{ii} $ by (\ref{eq15})).
Using (\ref{eq44}) we can obtain that $ C_{ij}^{ij} = C_{kl}^{kl} $ for all $ (i,j), (k,l)\in \mathbf{n} \times \mathbf{n} \setminus \hat{\mathcal{J}} \times \hat{\mathcal{J}}$
 and complete the proof of the second claim.
\end{proof}

\begin{lemma}\label{l28}
If $ \hat{\mathcal{J}} \neq \emptyset $, then there exists $ \lambda \in K $ such that
$$ \theta(e_{ij}) = \lambda e_{ij} + \sum_{\substack{x,y \in \hat{\mathcal{J}}}} C_{xy}^{ij} e_{xy},
\quad \text{for all }  (i,j)  \in \mathbf{n} \times \mathbf{n} \setminus \hat{\mathcal{J}} \times \hat{\mathcal{J}}.$$
\end{lemma}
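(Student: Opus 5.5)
This lemma should follow by combining Corollaries \ref{l24} and \ref{l26} with Lemma \ref{l27}(ii). Throughout, assume $\hat{\mathcal{J}}\neq\emptyset$. Define
$$\lambda:=C_{ij}^{ij}\quad\text{for any } (i,j)\in \mathbf{n}\times\mathbf{n}\setminus \hat{\mathcal{J}}\times\hat{\mathcal{J}}.$$
By Lemma \ref{l27}(ii), this value does not depend on the choice of $(i,j)$. The set of such pairs is nonempty as long as $\mathcal{J}\neq\emptyset$. If $\mathcal{J}=\emptyset$, the index set in the statement is empty and there is nothing to prove.

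Now fix $(i,j)$ outside $\hat{\mathcal{J}}\times\hat{\mathcal{J}}$. I will read off the shape of $\theta(e_{ij})$ in two cases.

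\textbf{Case $i=j$.} Then necessarily $i\in\mathcal{J}$. The second formula of Corollary \ref{l24} gives
$$\theta(e_{ii})=C_{ii}^{ii}e_{ii}+\sum_{(x,y)\in\hat{\mathcal{J}}\times\hat{\mathcal{J}}\setminus\{(i,i)\}}C_{xy}^{ii}e_{xy}.$$
Because $(i,i)\notin\hat{\mathcal{J}}\times\hat{\mathcal{J}}$, removing $(i,i)$ from the index set changes nothing. So the sum runs over all of $\hat{\mathcal{J}}\times\hat{\mathcal{J}}$, and $C_{ii}^{ii}=\lambda$.

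\textbf{Case $i\neq j$.} The second formula of Corollary \ref{l26} gives
$$\theta(e_{ij})=C_{ij}^{ij}e_{ij}+\sum_{(x,y)\in\hat{\mathcal{J}}\times\hat{\mathcal{J}}\setminus\{(i,j)\}}C_{xy}^{ij}e_{xy}.$$
For the same reason, the excluded pair $(i,j)$ is not in $\hat{\mathcal{J}}\times\hat{\mathcal{J}}$, so the sum is over all of $\hat{\mathcal{J}}\times\hat{\mathcal{J}}$. Again $C_{ij}^{ij}=\lambda$ by Lemma \ref{l27}(ii).

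In both cases we obtain $\theta(e_{ij})=\lambda e_{ij}+\sum_{x,y\in\hat{\mathcal{J}}}C_{xy}^{ij}e_{xy}$, which is the claim.

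No step is a genuine obstacle. The only point needing care is checking that $\lambda$ is a single common scalar for all pairs, including the diagonal ones $(i,i)$ with $i\in\mathcal{J}$. Lemma \ref{l27}(ii) covers this, since its conclusion is stated for all pairs in $\mathbf{n}\times\mathbf{n}\setminus\hat{\mathcal{J}}\times\hat{\mathcal{J}}$, diagonal pairs included. Its proof shows $C_{ii}^{ii}=C_{ij}^{ij}$ via (\ref{eq8}), (\ref{eq15}) and (\ref{eq21}).
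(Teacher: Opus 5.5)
Your proof is correct and follows essentially the same route as the paper. The paper also reads off from Corollaries \ref{l24} and \ref{l26} that the excluded pair $(i,j)$ lies outside $\hat{\mathcal{J}}\times\hat{\mathcal{J}}$, so the sum runs over all of $\hat{\mathcal{J}}\times\hat{\mathcal{J}}$, and then sets $\lambda:=C_{ij}^{ij}$ using Lemma \ref{l27}(ii). Your separate handling of the diagonal and off-diagonal cases, and of the vacuous case $\mathcal{J}=\emptyset$, only spells out what the paper does in a single line.
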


\begin{proof}
When $ \hat{\mathcal{J}} \neq \emptyset $, by Corollaries \ref{l24} and \ref{l26} we have
\[
\theta(e_{ij}) = C_{ij}^{ij} e_{ij} + \sum_{\substack{(x,y) \in \hat{\mathcal{J}}\times \hat{\mathcal{J}} \setminus 
\{(i,j)\} }} C_{xy}^{ij} e_{xy}, \quad \text{for all } ( i,j) \in \mathbf{n} \times \mathbf{n}.
\]
Especially,
 \begin{align}\label{eq45}
\theta(e_{ij}) = C_{ij}^{ij} e_{ij} + \sum_{\substack{x,y \in \hat{\mathcal{J}}}} C_{xy}^{ij} e_{xy}, \quad \text{for all } ( i,j) \in \mathbf{n} \times \mathbf{n} \setminus \hat{\mathcal{J}} \times \hat{\mathcal{J}}. 
 \end{align}
Notice that, by Lemma \ref{l27} (ii), the coefficients
$C_{ij}^{ij}$ are independent of the indices $(i,j) \in \mathbf{n} \times \mathbf{n} \setminus \hat{\mathcal{J}} \times \hat{\mathcal{J}}$. Hence we can set
$\lambda:=C_{ij}^{ij}$ in (\ref{eq45}) and complete the proof of the lemma.
\end{proof}

The main result of this paper is as follows.
\begin{theorem}\label{l29}
Let $\theta$ be a commuting map of the inflated algebra $\mathfrak{M}_n(r)$. Then $\theta$ is standard. Moreover,
every commuting map of $\mathfrak{M}_n(r)$ is proper if and only if $\hat{\mathcal{J}} = \emptyset$, i.e., $\mathfrak{M}_n(r)$ is the full matrix algebra ${\rm M}_n(K)$.
\end{theorem}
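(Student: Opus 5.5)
We split according to whether $\hat{\mathcal{J}}$ is empty and, in each case, assemble the descriptions of $\theta$ on the basis $\{e_{ij}\}$ from Corollaries \ref{l24}, \ref{l26} and Lemmas \ref{l27}, \ref{l28} into a global formula $\theta(A)=\lambda A+\mu(A)$.

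\textbf{Case $\hat{\mathcal{J}}=\emptyset$.} Here $\mathfrak{M}_n(r)={\rm M}_n(K)$ and $Z={\rm span}\{I_n\}$ by Lemma \ref{l1}. By Corollary \ref{l26}, $\theta(e_{ij})=\lambda_{ij}I_n+C^{ij}_{ij}e_{ij}$ for $i\ne j$. By Lemma \ref{l27}(i), $C^{ij}_{ij}=:\lambda$ is independent of $(i,j)$; if $n=1$ everything is trivially central. By Corollary \ref{l24}, $\theta(e_{ii})=\lambda_iI_n+(C^{ii}_{ii}-\lambda_i)e_{ii}$, and by (\ref{eq8}), $C^{ii}_{ii}-\lambda_i=C^{ii}_{ii}-C^{ii}_{jj}=C^{ij}_{ij}=\lambda$ for any $j\ne i$. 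Hence $\theta(e_{ii})=\lambda e_{ii}+\lambda_iI_n$. Defining $\mu$ linearly by $\mu(e_{ii})=\lambda_iI_n$ and $\mu(e_{ij})=\lambda_{ij}I_n$, we get $\theta(A)=\lambda A+\mu(A)$ with $\mu$ central-valued. So $\theta$ is proper, hence standard.

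\textbf{Case $\hat{\mathcal{J}}\neq\emptyset$.} By Lemma \ref{l1}, $Z={\rm span}\{e_{xy}:x,y\in\hat{\mathcal{J}}\}$. If $\mathcal{J}=\emptyset$, the multiplication is zero, every linear map is commuting, and $Z$ is the whole algebra, so $\theta$ is trivially standard. Otherwise take $\lambda$ from Lemma \ref{l28}, so that $\theta(e_{ij})-\lambda e_{ij}\in Z$ for $(i,j)\notin\hat{\mathcal{J}}\times\hat{\mathcal{J}}$. For $(i,j)\in\hat{\mathcal{J}}\times\hat{\mathcal{J}}$, Corollaries \ref{l24} and \ref{l26} show $\theta(e_{ij})\in Z$, and $e_{ij}\in Z$ as well, so $\theta(e_{ij})-\lambda e_{ij}\in Z$. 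Setting $\mu=\theta-\lambda\,{\rm id}$, $\mu$ is central-valued and $\theta=\lambda\,{\rm id}+\mu$ is standard with $c=\lambda$.

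\textbf{The equivalence.} If $\hat{\mathcal{J}}=\emptyset$, the first case shows every commuting map is proper. If $\hat{\mathcal{J}}\neq\emptyset$, take $\theta={\rm id}$, which is commuting, and suppose ${\rm id}(A)=zA+\mu(A)$ with $z,\mu(A)\in Z$. For $i\in\mathcal{J}$ (when $r>0$), $z\cdot e_{ii}=0$ since $z$ is supported on $\hat{\mathcal{J}}\times\hat{\mathcal{J}}$ and multiplication reads only the $\mathcal{J}$-block. Thus $e_{ii}=\mu(e_{ii})\in Z$, a contradiction. When $r=0$ the multiplication is zero and every map is proper; the statement's "only if" direction would then need $r\ge1$. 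We expect this degenerate edge case, rather than the assembly of the earlier lemmas, to be the main subtlety: the identity map is standard but not proper only if $\mathcal{J}\ne\emptyset$.
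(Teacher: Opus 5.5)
Your proof is correct and follows the paper's own argument: you use the same split on whether $\hat{\mathcal{J}}$ is empty, the same assembly of Corollaries \ref{l24}, \ref{l26} and Lemmas \ref{l27}, \ref{l28} (with (\ref{eq8}) identifying $C^{ii}_{ii}-\lambda_i$ with $\lambda$), and the same choice $\mu=\theta-\lambda\,\mathrm{id}$. You also go beyond the paper in two correct ways: the identity map gives the improper commuting map that the paper only asserts exists, and your caveat is right that for $r=0$ (so $\hat{\mathcal{J}}=\mathbf{n}\neq\emptyset$) every linear map is commuting and central-valued, hence proper, so the ``only if'' direction genuinely needs $r\geq 1$ --- the paper's ``without loss of generality $\mathcal{J}\neq\emptyset$'' silently drops exactly this counterexample.
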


\begin{proof}
We have seen that the action of commuting map $\theta$
differ significantly at the two cases
$\hat{\mathcal{J}} = \emptyset$ and $\hat{\mathcal{J}} \neq \emptyset$. When $\hat{\mathcal{J}} = \emptyset$,
$\mathfrak{M}_n(r)$ is the full matrix algebra ${\rm M}_n(K)$,
and the desired result is well-known. However, we tend to give a self-contained proof here in the sense of inflated algebras.

\textbf{Case 1. $\hat{\mathcal{J}} = \emptyset$.}
We assume $|\mathbf{n}| = |\mathcal{J}| \geq 2$ without loss
of generality.
Let us define a $K$-linear map $\mathcal{L}: \mathfrak{M}_n(r) \to \mathfrak{M}_n(r)$ by $\mathcal{L}(e_{ij}) = C_{ij}^{ij} e_{ij}$ and $\mathcal{L}(e_{ii}) = C_{ij}^{ij} e_{ii}$, for all $i \neq j\in\mathbf{n}$.
Notice that $C_{ij}^{ij} = C_{kl}^{kl}$ for any $i \neq j$ and $k\neq l$ by Lemma \ref{l27} (i). Hence the map $\mathcal{L}$ is well-defined and is of the form $\mathcal{L}(f) = \lambda f$, for all $f \in \mathfrak{M}_n(r)$, where
$\lambda := C_{ij}^{ij}$ for all
$i \neq j$ is identified with the scalar matrix $\lambda I_n$.

Let $\mu := \theta - \mathcal{L}$. For any $j \neq i$,
it follows from Lemma \ref{l23} and Corollary \ref{l24} that
$\mu(e_{ii}) = C_{jj}^{ii} \sum_{\substack{x \in \mathcal{J}}} e_{xx} + \left( C_{ii}^{ii} - C_{jj}^{ii} - C_{ij}^{ij} \right) e_{ii}$. Combining this fact with the relation (\ref{eq8}), we have $\mu(e_{ii}) = C_{jj}^{ii} \sum_{\substack{x \in \mathcal{J} }} e_{xx} \in Z(\mathfrak{M}_n(r))$ by Lemma \ref{l1} (i). At the same time,
by Corollary \ref{l26} and the definition of $\lambda$,
we have $\mu(e_{ij}) = \lambda_{ij} \sum_{x \in \mathcal{J}} e_{xx} + C_{ij}^{ij} e_{ij} - \lambda e_{ij} = \lambda_{ij} \sum_{x \in \mathcal{J}} e_{xx}$, where $\lambda_{ij} \in K$.
This implies $\mu(e_{ij}) \in Z(\mathfrak{M}_n(r))$ for all $i \neq j$, by Lemma \ref{l1} (i). Hence $\mu$ is a central-valued linear map and $\theta$ is proper in this case.

\textbf{Case 2. $\hat{\mathcal{J}}\neq \emptyset$.}
If $\mathcal{J} = \emptyset$, then $\mathbf{n} = \hat{\mathcal{J}}$ and the multiplication of $\mathfrak{M}_n(r)$ is trivial. Hence we assume that $\mathcal{J} \neq \emptyset$ without loss of generality. 

By Lemma \ref{l28}, we have 
$$\theta(e_{ij}) = \lambda e_{ij} + \sum_{x,y \in \hat{\mathcal{J}} } C_{xy}^{ij} e_{xy}, \quad \text{for all } (i,j) \in \mathbf{n} \times \mathbf{n} \setminus \hat{\mathcal{J}} \times \hat{\mathcal{J}},$$ 
for some $\lambda \in K$. On the other hand, if $i,j \in \hat{\mathcal{J}}$, it follows from Corollaries \ref{l24} and \ref{l26} that
\[
\theta(e_{ij}) = \sum_{x,y \in \hat{\mathcal{J}} } C_{xy}^{ij} e_{xy},  \quad \text{for all } i,j\in \hat{\mathcal{J}}.
\]
Let us define a $K$-linear map $\mathcal{L}: \mathfrak{M}_n(r) \to \mathfrak{M}_n(r)$ by $\mathcal{L}(e_{ij}) = \lambda e_{ij}$, for all $i,j \in \mathbf{n}$. 

Let $\mu := \theta - \mathcal{L}$. 
If $(i,j) \in \mathbf{n} \times \mathbf{n} \setminus \hat{\mathcal{J}} \times \hat{\mathcal{J}}$, then $\mu(e_{ij}) = \sum_{x,y \in \hat{\mathcal{J}} } C_{xy}^{ij} e_{xy} \in Z(\mathfrak{M}_n(r))$ by Lemma \ref{l1} (ii). 
If $(i,j) \in \hat{\mathcal{J}} \times \hat{\mathcal{J}}$, then $\mu(e_{ij}) = -\lambda e_{ij} + \sum_{x,y \in \hat{\mathcal{J}} } C_{xy}^{ij} e_{xy} \in Z(\mathfrak{M}_n(r))$ by Lemma \ref{l1} (ii). Hence $\mu$ is a central-valued linear map and $\theta$ is standard in this case. Moreover, in this case, there exists
a standard commuting map which is improper.
\end{proof}


\begin{corollary} 
Every commuting map of ${\rm M}_n(K)$ is proper.
\end{corollary}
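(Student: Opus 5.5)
This corollary is the special case $r=n$ of Theorem \ref{l29}, so I would deduce it directly from that theorem rather than redo any computation. The plan has two steps. First, identify ${\rm M}_n(K)$ with an inflated algebra. Second, read off the conclusion of Case 1 in the proof of Theorem \ref{l29}.

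For the first step, take $P=I_n$. Then the multiplication $A\cdot B=API_nB=AB$ is the ordinary matrix product, so ${\rm M}_n(K)=\mathfrak{M}_n(n)$ as algebras. In this situation $\mathcal{J}=\mathbf{n}$ and $\hat{\mathcal{J}}=\emptyset$. Any commuting map $\theta$ of ${\rm M}_n(K)$ is therefore a commuting map of $\mathfrak{M}_n(n)$. Since Lemmas \ref{l21}--\ref{l27} and Corollaries \ref{l24}, \ref{l26} hold for arbitrary commuting $\theta$ on $\mathfrak{M}_n(r)$, they apply to $\theta$.

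For the second step, the "if" direction of Theorem \ref{l29} (the case $\hat{\mathcal{J}}=\emptyset$) already gives that $\theta$ is proper. Concretely, Case 1 of that proof produces $\lambda=C^{ij}_{ij}\in K$, which is independent of $i\neq j$ by Lemma \ref{l27}(i). It then shows that $\mu=\theta-\lambda\,\mathrm{id}$ sends every $e_{ij}$ into $K\sum_{x}e_{xx}=KI_n$. By Lemma \ref{l1}(i), this space is $Z({\rm M}_n(K))$. Since $\lambda I_n\in Z({\rm M}_n(K))$, we get $\theta(x)=\lambda I_n\, x+\mu(x)$ with $\mu$ central-valued, which is exactly properness.

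Two small points need checking. The first is the trivial case $n=1$: here ${\rm M}_1(K)=K$ is commutative, so every linear map is commuting and is proper with $\lambda=0$ and $\mu=\theta$. The second is the notion of properness itself. For the unital algebra ${\rm M}_n(K)$ the paper notes that proper and standard coincide, so proper is the right notion to verify. The only real work lies in the verification of Case 1, namely that $\mu(e_{ii})$ is central via relation (\ref{eq8}), and that step is already done in the proof of Theorem \ref{l29}.
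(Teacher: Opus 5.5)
Your proposal is correct and takes the same route as the paper. The corollary is the case $r=n$ (so $\hat{\mathcal{J}}=\emptyset$) of Theorem~\ref{l29}, and Case~1 of that proof shows that $\theta-\lambda\,\mathrm{id}$ is central-valued using Lemma~\ref{l27}(i), relation (\ref{eq8}) and Lemma~\ref{l1}(i), exactly as you describe; your explicit handling of $n=1$ just fills in the case the paper dismisses ``without loss of generality.''
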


%
%
%

We end this note by extending Theorem \ref{l29} to a broader class of Munn's semigroup algebras.

\begin{remark}
Let $m,n$ be two positive integers. Let ${\rm M}_{m\times n}(K)$ be the set of all $m\times n$ matrices over $K$.
Given an $n\times m$ matrix $P$ over $K$, we define a multiplication ‘$\cdot$’ on ${\rm M}_{m\times n}(K)$ by
$A \cdot B=A P B$. It is clear that under the usual linear operations and the multiplication $\cdot$\, ,
${\rm M}_{m\times n}(K)$ becomes an associative $K$-algebra,
denoted by $\mathfrak{M}(m, n, P)$.

The algebra $\mathfrak{M}(m, n, P)$ was introduced independently by Munn \cite{Munn} when he studied the representation theory of semigroups, and by Brown \cite{Brown} when he studied the
representation theory of orthogonal groups.
Nowadays, $\mathfrak{M}(m, n, P)$ is known as a Munn's semigroup algebra or a generalized matrix algebra in the
sense of Brown, which is generally not 
a generalized matrix algebra
introduced by the third author and Wei \cite{XiaoWei}.
In fact, $\mathfrak{M}(m, n, P)$ is simple if and only if
it possesses an identity element \cite[Theorem]{Brown}.

Similar to Lemma \ref{lemma1}, we can get an isomorphism
of algebras
$\mathfrak{M}(m, n, P)\cong \mathfrak{M}(m, n, \left[\smallmatrix I_{r} & 0 \\ 0 & 0 \endsmallmatrix \right])$, where $r$ is the rank of the matrix $P$.
Then following the procedures done above for $\mathfrak{M}_n(r)$, we can show that
Theorem \ref{l29} also holds for $\mathfrak{M}(m, n, P)$,
i.e., every commuting map of $\mathfrak{M}(m, n, P)$
is standard.
\end{remark}
\smallskip
\noindent{\bf Funding.} 
The first author is supported by the National Natural Science Foundation of China (No.12401022).
The second author is
supported by the Natural Science Foundation of Fujian Province (No.2023J01126).

\smallskip
\noindent{\bf Data Availability Statement.} All data generated or analyzed during this study are included in this published article. No additional data are available.

\smallskip
\noindent{\bf Conflict of interest.} On behalf of all authors, the corresponding author states that there is no conflict of
interest.


\end{document}